\title{On the rate of convergence in quenched Voronoi percolation\thanks{Research in part supported by the Swedish Research Council grant 2016-04442 (DA), FAPERJ bolsa Nota 10 proc. E-26/200.194/2020 and Coordenação de Aperfeicoamento de Pessoal de Nível Superior - Brasil (CAPES) - finance code 001 (DdlR), CNPq bolsa de produtividade em pesquisa proc. 307521/2019-2 and FAPERJ Jovem cientista do nosso estado proc. 202.713/2018 (SG).}}
\author{Daniel Ahlberg, Daniel de la Riva and Simon Griffiths}
\date{\vspace{-5ex}}
\newtheorem{theorem}{Theorem}
\newtheorem{proposition}[theorem]{Proposition}
\newtheorem{definition}[theorem]{Definition}
\newtheorem{remark}[theorem]{Remark}
\newtheorem{lemma}[theorem]{Lemma}
\newtheorem{corollary}[theorem]{Corollary}
\newtheorem{claim}[theorem]{Claim}
\def\E{{\mathbb{E}}}
 \def\F{\mathcal{F}}
\DeclareMathOperator{\V}{Var} 
\def\P{{\mathbb{P}}}
\def\Var{\textup{Var}}
\def\Inf{\textup{Inf}}
\def\eps{\varepsilon}
\numberwithin{theorem}{section}
\def\R{{\mathbb{R}}}
\def\Z{{\mathbb{Z}}}
\def\N{{\mathbb{N}}}
\def\Q{{\mathbb{Q}}}
\def\A{{\mathcal{A}}}
\def\Ind{{\mathbbm{1}}}
\def\Po{{\mathbb P}^\ast}
\newcommand\norm[1]{\left\lVert#1\right\rVert}
\newcommand{\bn}[1]{Bin(n,#1)}
\newcommand{\poh}[1]{\P_{Po|_{\mathbb{H}}}(#1)}
\newcommand{\poq}[1]{\P_{Po|_{\Q}}(#1)}
\begin{document}

\maketitle

\begin{abstract}
Position $n$ points uniformly at random in the unit square $S$, and consider the Voronoi tessellation of $S$ corresponding to the set $\eta$ of points. Toss a fair coin for each cell in the tessellation to determine whether to colour the cell red or blue. Let $H_S$ denote the event that there exists a red horizontal crossing of $S$ in the resulting colouring.
In 1999, Benjamini, Kalai and Schramm conjectured that knowing the tessellation, but not the colouring, asymptotically gives no information as to whether the event $H_S$ will occur or not. More precisely, since $H_S$ occurs with probability $1/2$, by symmetry, they conjectured that the conditional probabilities $\P(H_S|\eta)$ converge in probability to 1/2, as $n\to\infty$. This conjecture was settled in 2016 by Ahlberg, Griffiths, Morris and Tassion. In this paper we derive a stronger bound on the rate at which $\P(H_S|\eta)$ approaches its mean. As a consequence we strengthen the convergence in probability to almost sure convergence.

\end{abstract}

\section{Introduction} 

In a seminal paper from 1999, Benjamini, Kalai and Schramm~\cite{BKS99} introduced the concept of noise sensitivity for Boolean functions. The study of sensitivity and stability of Boolean functions rapidly grew into a new area of research, and two books cover much of the early development~\cite{garban_steif_book,odonnell_2014}. Benjamini, Kalai and Schramm~\cite{BKS99} outlined methods for the study of noise sensitivity, connecting noise sensitivity to influences of bits and revealment of algorithms, and these methods remain central to this day. Using these methods, the authors gave examples of noise sensitive Boolean functions, most significantly functions encoding crossing events in Bernoulli percolation on $\Z^2$.

In~\cite{BKS99}, a set of conjectures was presented that has been leading much of the development since. One of these conjectures concerned stronger quantitative bounds on the noise sensitivity of crossings in Bernoulli percolation, which would have implications for the study of so-called `exceptional times' in dynamical percolation. Such bounds were first obtained by Schramm and Steif~\cite{SS10}, then by Garban, Pete and Schramm~\cite{GPS10} and more recently Tassion and Vanneuville~\cite{TV}. In another direction, the study of noise sensitivity was extended to percolation models in the continuum by Ahlberg, Broman, Griffiths and Morris~\cite{ABGM14}, Ahlberg and Baldasso~\cite{AB18}, Vanneuville~\cite{vanneuville3}, and most recently Last, Peccati and Yogeshwaran~\cite{LPY}.
One of the conjectures from~\cite{BKS99} concerned such a continuum model, known as Voronoi percolation, and that conjecture has motivated the current work.

Let $S:=[-\frac12,\frac12]^2$ be the unit square. Position $n$ points independently and uniformly at random in $S$. Let $\eta$ denote the resulting set of positions. The \emph{Voronoi tessellation} of $S$ with respect to $\eta$ is the division $(V(u))_{u\in\eta}$ of $S$, where the cell $V(u)$ consists of all point of $S$ closer to $u$ than to any other point in $\eta$, i.e.\
 $$
 V(u):= \big\{ x\in S: \norm{u-x}_{2}\leq\norm{v-x}_{2} \  \text{for every} \ v\in\eta\big\},
 $$
where $\|\cdot\|_2$ refers to Euclidean distance. Next, colour the cells of the tessellation `red' or `blue' with equal probability, independently of one another; denote by $\P_S$ the resulting measure. We denote by $H_{S}$ the event that there exists a red horizontal crossing in the resulting colouring of $S$, i.e., a continuous path connecting the left-hand side of $S$ to the right-hand side and contained in the union of red cells. 

A standard duality argument, due to the symmetry between red and blue, shows that $\P_S(H_{S})=1/2$. Benjamini, Kalai and Schramm~\cite{BKS99} conjectured that knowing the Voronoi tessellation, but not knowing the colouring of the cells, typically gives almost no information of whether the colouring contains a red left-right crossing. More precisely, they asked whether for every $\eps>0$ we have for all large $n$ that
$$\P_S\big(|\P(H_{S}|\eta)-1/2 |>\eps \big)<\eps. $$
Ahlberg, Griffiths, Morris and Tassion~\cite{QuenchedVoronoi} confirmed this conjecture. More precisely, they showed that there exists $c>0$ such that for all large $n$ we have
\begin{equation}\label{eq:AGMT}
\P_S\big( |\P(H_{S}|\eta)-1/2|> n^{-c}\big)< n^{-c}.
\end{equation}


The approach of~\cite{QuenchedVoronoi} would in fact still only give a bound of the form $n^{-c}$ for the probability of a fixed deviation of $\P(H_S|\eta)$ away from its mean. In this paper we prove a stronger bound for the probability of a large deviation of this form, via estimates of its moment generating function.

\begin{theorem}\label{Improved}
 There exists $N\ge1$ such that for all $t>0$ and $n\ge N$ we have
$$\P_S\big( |\P(H_{S}|\eta)-1/2| \geq t \big) \leq 4\exp\big(-t\,e^{(\log\log n)^2}\big).$$
\end{theorem}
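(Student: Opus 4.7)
The natural approach is to bound the moment generating function of $f(\eta):=\P(H_S|\eta)-1/2$ and then apply Markov's inequality. With $L:=e^{(\log\log n)^2}$, it suffices to establish
\begin{equation*}
\E_S\!\big[\exp(\pm L\, f(\eta))\big]\;\leq\; 2
\end{equation*}
for both signs; two applications of Markov (one per tail) then yield $\P_S(|f(\eta)|\geq t)\leq 4e^{-Lt}$, as required. Expanding the exponential, this MGF bound is in turn implied by sub-exponential moment estimates of the form $\E_S[|f(\eta)|^k]\leq C^k\,k!\,L^{-k}$ for every $k\geq 1$, with an absolute constant $C$.

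As a preliminary reduction I would Poissonize, replacing the $n$ uniform points by a Poisson point process of intensity $n$ on $S$ and transferring the final statement back to the binomial model using standard concentration of the Poisson counting measure; the resulting discrepancy is easily absorbed in the $4e^{-Lt}$ budget. On Poisson space, $f$ is a functional of a Poisson random measure and one has access to the add-one-point difference operators
\begin{equation*}
D_x f(\eta)\;:=\;\P(H_S|\eta\cup\{x\})-\P(H_S|\eta),
\end{equation*}
their iterates $D_{x_1,\dots,x_k}f$, the associated chaos expansion, and modified log-Sobolev or Efron--Stein type inequalities which translate pointwise control of these derivatives into moment and MGF bounds. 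The variance estimate underlying \eqref{eq:AGMT} handles the case $k=2$ with considerable room to spare, since $n^{-c}\ll L^{-2}$.

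The heart of the argument is to obtain the moment estimates $\E_S[|f|^k]\leq C^k k!\, L^{-k}$ with the correct $L=e^{(\log\log n)^2}$. A natural route is a multi-scale decomposition: split the contribution to $f$ across $O(\log\log n)$ dyadic scales of cell sizes and, at each scale, use the pivotal/influence bounds and algorithmic inputs developed in~\cite{QuenchedVoronoi} to show that $\P(H_S|\eta)$ is barely affected by resampling a cell on that scale, accumulating a geometric gain at each level. In my view the main obstacle is uniformly controlling the iterated differences $D_{x_1,\dots,x_k}f$ in terms of the spatial configuration of the points $x_1,\dots,x_k$: the MGF bound requires simultaneous estimates for all moment orders, whereas the variance argument underlying \eqref{eq:AGMT} is essentially a first-order statement about a single resampled point. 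Extending the spatial independence and decoupling arguments of that paper to $k$-tuples of added points, with constants that degrade only polynomially in $k$, is where the substantive new work must lie.
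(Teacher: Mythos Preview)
Your endgame is correct: the paper also bounds the moment generating function $F(\lambda)=\E_S[e^{\lambda(Z-1/2)}]$ with $Z=\P(H_S|\eta)$, shows $F(\lambda)\le 2$ for $\lambda$ up to order $e^{(\log\log n)^2}$, and then applies Markov twice. The gap is in how you propose to reach that MGF bound. A direct attack on the iterated add-one-point operators $D_{x_1,\dots,x_k}f$ via chaos expansion or log-Sobolev, with constants degrading only polynomially in $k$, is not what is done and, as far as I can see, is not known to deliver the stated rate: a single pass through the Efron--Stein/variance machinery of~\cite{QuenchedVoronoi} produces only a polynomial improvement $n^{-c}$, and there is no obvious mechanism by which chaos-expansion control of higher cumulants would spontaneously give super-polynomial decay.

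The actual engine is an \emph{iterative bootstrap} through the one-arm/revealment machinery, carried out uniformly over all axis-parallel sub-rectangles $R'\subseteq R(\rho)$ (Theorem~\ref{thm:boundary}). One round takes a deviation bound of the form $4e^{-t\gamma_k(\log n)^k}$ and upgrades it to $4e^{-t\gamma_{k+1}(\log n)^{k+1}}$ as follows: (i) feed the current bound back to get a quenched box-crossing estimate for $3{:}1$ rectangles in the plane and $3{:}2$ rectangles in the half-plane; (ii) use $\Theta(\log n)$ disjoint annuli to bound the quenched one-arm probability (Lemmas~\ref{OneArmImp} and~\ref{OneArmImp_alt}), which is where the extra factor of $\log n$ in the exponent is generated; (iii) bound the revealment of the BKS--Kozma algorithm and hence $\sum_j\Inf_j(H_{R'}|\eta)^2$ (Propositions~\ref{pt1} and~\ref{pt1_alt}); (iv) convert this influence bound into an MGF bound via the martingale inequality~\eqref{eqn:ineq1} and the recursion $F(\lambda)\le F(\lambda/2)^2/(1-O(\lambda^2))$ (Proposition~\ref{pt2}). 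The final rate comes from choosing $k\sim\log\log n$, which is admissible because each round only forces $n\ge N^{2^{k-1}}$. Your ``$O(\log\log n)$ dyadic scales'' intuition is pointing at the number of bootstrap rounds, not at a spatial decomposition of $f$; and the crucial point you are missing is that each round must re-enter the algorithm/one-arm estimate using the \emph{improved} quenched crossing bound from the previous round---this feedback is what your direct higher-order difference approach does not provide.
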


As a straightforward corollary of the above theorem, which does not follow from the work in~\cite{QuenchedVoronoi}, we obtain the following almost sure statement via the Borel-Cantelli Lemma.

\begin{corollary}
Suppose that we continue indefinitely to position points uniformly at random in $S$. Then,
$$ \P_{S} \Big(\lim_{n \to \infty }\P(H_{S}|\eta) = 1/2 \Big) =1. $$ 
\end{corollary}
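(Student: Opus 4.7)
The plan is to deduce the almost-sure statement from Theorem~\ref{Improved} by a single-pass Borel--Cantelli argument. First I would fix the appropriate probability space: sample an i.i.d.\ sequence $U_1,U_2,\ldots$ of uniform points in $S$ together with independent fair coin flips assigning a colour to each Voronoi cell generated by each prefix, and write $\eta_n:=\{U_1,\ldots,U_n\}$. Under this coupling, for each fixed $n$ the marginal law of $\eta_n$ is precisely the one to which Theorem~\ref{Improved} applies, so for every fixed $\eps>0$ and all $n\ge N$,
\[
\P_S\bigl(|\P(H_S|\eta_n)-1/2|\ge\eps\bigr)\;\le\;4\exp\bigl(-\eps\, e^{(\log\log n)^2}\bigr).
\]

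The key observation is that $(\log\log n)^2 \to \infty$, so $e^{(\log\log n)^2}$ dominates every fixed power of $\log n$; in particular, for all sufficiently large $n$ (depending on $\eps$) one has $\eps\, e^{(\log\log n)^2}\ge 2\log n$, and the right-hand side above is then at most $4n^{-2}$. Summing over $n$ and invoking the first Borel--Cantelli lemma yields, $\P_S$-almost surely, that $|\P(H_S|\eta_n)-1/2|<\eps$ for all but finitely many $n$. Intersecting the resulting full-measure events along a countable sequence $\eps_k\downarrow 0$ upgrades this to $\lim_{n\to\infty}\P(H_S|\eta_n)=1/2$ almost surely.

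There is no substantial obstacle here; the whole point of working with the strong tail bound of Theorem~\ref{Improved}, rather than with the merely polynomial bound~\eqref{eq:AGMT} from~\cite{QuenchedVoronoi}, is precisely that its decay is faster than any polynomial in $n^{-1}$, giving summability to spare. The only minor point worth flagging is the need to fix the ambient probability space as a coupling of the tessellations across all $n$, so that the event $\{\P(H_S|\eta_n)\to 1/2\}$ has an unambiguous meaning; Theorem~\ref{Improved} provides only marginal information, which is nevertheless exactly what the first Borel--Cantelli lemma requires.
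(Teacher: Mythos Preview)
Your argument is correct and is essentially the same as the paper's: the paper states the corollary as an immediate consequence of Theorem~\ref{Improved} via the Borel--Cantelli Lemma, without spelling out the details. Your write-up simply makes explicit the summability (the tail is eventually $\le 4n^{-2}$), the countable intersection over $\eps_k\downarrow0$, and the need for a common probability space across all $n$, which the paper leaves implicit.
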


Our method of proof is based on the same ideas that underlie the approach of~\cite{QuenchedVoronoi}. However, the only results from~\cite{QuenchedVoronoi} that we shall need are those in Sections~3.1 and~3.2 of that paper. In order to outline our method of proof, and to compare it to the approach taken in~\cite{QuenchedVoronoi}, we shall proceed with a brief outline of noise sensitivity of Boolean functions.


\subsection{Noise sensitivity of Boolean functions}

A $\{0,1\}$-valued function on the discrete cube $\{0,1\}^n$ is known as a Boolean function. There is a one-to-one correspondence between Boolean functions $f:\{0,1\}^n\to\{0,1\}$ and subsets of $\{0,1\}^n$ via the mapping $f\mapsto\{\omega:f(\omega)=1\}$, and we shall henceforth work with these subsets as it suits our purposes. We let $\P$ refer to uniform measure on the discrete cube $\{0,1\}^n$, and $\E$ expectation with respect to $\P$. (Note that this is consistent with our convention to drop the subscript of $\P_S$ when conditioning on $\eta$, as the conditional measure corresponds to a uniform two-colouring of the tessellation.)

Given $\omega\in\{0,1\}^n$ we obtain a perturbation $\omega^\eps$ of $\omega$ by independently re-randomizing each bit with probability $\eps>0$. A sequence $(A_n)_{n\ge1}$ of events $A_n\subseteq\{0,1\}^n$, is said to be \emph{noise sensitive} if for every $\eps>0$, as $n\to\infty$,
$$
\E\big[\Ind_{A_n}(\omega)\Ind_{A_n}(\omega^\eps)\big]-\E\big[\Ind_{A_n}(\omega)\big]^2\to0.
$$
One of the main achievements in~\cite{BKS99} was the characterization of noise sensitivity for a sequence of Boolean functions in terms of their influences. Given an event $A\subseteq\{0,1\}^n$ we define the \emph{influence} of $A$ with respect to the bits $j=1,2,\ldots,n$ as $\Inf_j(A):=\P(\Ind_{A}(\omega)\neq\Ind_{A}(\sigma_j\omega))$, where $\sigma_j$ is the operator that replaces the $j$th bit $\omega_j$ of $\omega$ by $1-\omega_j$. A central result in~\cite{BKS99}, which has come to be referred to as the BKS Theorem, states that a sufficient condition for a sequence of events $(A_n)_{n\ge1}$ (or the sequence of indicators corresponding to these events) to be noise sensitive is that
\begin{equation}\label{eq:BKScond}
\lim_{n\to\infty}\sum_{j=1}^n\Inf_j(A_n)^2=0.
\end{equation}

Also in~\cite{BKS99}, the authors devised a method based on algorithmic exploration of the bits of $\omega$, as a means to verify condition~\eqref{eq:BKScond} in specific examples. This `algorithm method' was later refined by Schramm and Steif~\cite{SS10}. A \emph{randomized algorithm} is an algorithm that sequentially queries bits of $\omega$, and where the next bit queried is determined by some probability measure depending on the information obtained thus far. An algorithm $\mathcal{A}$ is said to \emph{determine} an event $A$ if, when the algorithm ends, it has determined whether $\omega\in A$ or not. The \emph{revealment} of $\mathcal{A}$ is defined as $\delta(\mathcal{A}):=\max_j\P(\mathcal{A}\text{ queries bit }j)$. The Revealment Theorem, due to Schramm and Steif~\cite{SS10}, implies the following:\footnote{Below, in the appendix, we offer a direct probabilistic proof of this version of the Revealment Theorem.} For any monotone event $A\subseteq\{0,1\}^n$ and randomized algorithm $\mathcal{A}$ that determines $A$ we have
\begin{equation}\label{eq:SS}
    \sum_{j=1}^n\Inf_j(A)^2\le\delta(\mathcal{A}).
\end{equation}
Combining~\eqref{eq:BKScond} and~\eqref{eq:SS} outlines the essence of the algorithm method from~\cite{BKS99}: identification of an algorithm with low revealment that determines the event in question.

\subsection{Outline of the argument}

The approach taken in this paper is greatly inspired by that of~\cite{QuenchedVoronoi}. One of the key steps in the proof of~\eqref{eq:AGMT} was an Efron-Stein-like inequality, linking the variance of $\P(H_S|\eta)$ to the conditional influences of $H_S$ given $\eta$:
\begin{equation}\label{eq:efronstein}
\Var_S\big(\P(H_S|\eta)\big)\le\E_S\bigg[\sum_{j\in\eta}\Inf_j(H_S|\eta)^2\bigg],
\end{equation}
where $\Inf_j(H_S|\eta):=\P\big(\Ind_{H_S}(\omega)\neq\Ind_{H_S}(\sigma_j\omega)|\eta\big)$. Since the conditional measure $\P(\,\cdot\,|\eta)$ coincides with uniform measure on the discrete cube,~\eqref{eq:efronstein} together with Chebyshev's inequality, allowed the authors of~\cite{QuenchedVoronoi} to respond to the conjecture from~\cite{BKS99} by describing an algorithm with low revealment. Interestingly, it turns out that in order to do so, a key ingredient is to show that, in the limit, the random variables $\P(H_S|\eta)$ do not accumulate mass at $0$ or $1$. That is, in order to prove that $\P(H_S|\eta)$ approaches $1/2$, a key first step consists (roughly speaking, we will elaborate on this below) of proving that for every $\eps>0$ there exists $\delta>0$ such that for all $n\ge1$
\begin{equation}\label{eq:nondeg}
\P_S\big(\P(H_S|\eta)\in(\delta,1-\delta)\big)>1-\eps.
\end{equation}

Once~\eqref{eq:AGMT} has been obtained, it is tempting to try to repeat the argument, replacing~\eqref{eq:nondeg} by the sharper bound in~\eqref{eq:AGMT}, in order to obtain a stronger result. However, since the sum of influences squared, with high probability, will decay at the inverse rate of a low-degree polynomial, the inequality in~\eqref{eq:efronstein} is unable to improve upon~\eqref{eq:AGMT}. To improve upon~\eqref{eq:AGMT}, we will need a replacement for~\eqref{eq:efronstein}.

Our proof of Theorem~\ref{Improved} will consist of two main components. The first is a bound on the sum of influences squared by means of an algorithm with low revealment. This step is similar to the analogous step in~\cite{QuenchedVoronoi}, but more carefully quantified to serve our needs.
The second component is an exponential inequality, which substitutes~\eqref{eq:efronstein}, and is derived via an estimate on the moment generating function of $\P(H_S|\eta)$.
The bound in Theorem~\ref{Improved} will be obtained by iterated use of the above two steps. Already the first iteration yields an improvement to~\eqref{eq:AGMT}, with a bound decaying at the inverse rate of an arbitrarily high-degree polynomial, and an induction argument strengthens the bound further.

There are two complications arising from the above iterative scheme. One of these relates to the fact that the expression in~\eqref{eq:nondeg} is not quite the input required to obtain a bound on the revealment of a suitable algorithm. Instead we require the analogous statement for $\P(H_R|\eta)$, where $R\subseteq S$ is a rectangle of aspect ratio $3:1$. That means that we will need to work in a greater generality in order to produce an output that can feed into the next stage of the iteration. That is, in order to prove Theorem~\ref{Improved}, we shall in fact be required to prove a more general statement.

Given a rectangle $R\subseteq\R^2$ we let $\P_R$ denote the probability measure corresponding to the positioning $n$ points uniformly at random in $R$, independently from one another, and the subsequent uniform two-colouring the Voronoi cells with respect to the set of points $\eta$. (The dependence on $n$ is suppressed from the notation as we believe that no confusion will arise.) In particular, for $\rho>0$ and $n\ge1$, we let
$$
R(\rho,n):=\Big[-\frac12\sqrt{\rho n},\frac12\sqrt{\rho n}\Big]\times\Big[-\frac12\sqrt{n/\rho},\frac12\sqrt{n/\rho}\Big]
$$
denote the rectangle with aspect ratio $\rho$ and area $n$ centered at the origin, and let $R(\rho):=R(\rho,1)$.

We shall prove the following theorem.

\begin{theorem}\label{thm:optimized}
For every $\theta>0$ there exists $N=N(\theta)$ such that for every $\rho\in[\theta,1/\theta]$, $n\ge N(\theta)$ and $t>0$ we have for any (axis parallel) rectangle $R'\subseteq R(\rho)$ of area at least $\theta$ that
$$
\P_{R(\rho)}\big(|\P(H_{R'}|\eta)-\P_{R(\rho)}(H_{R'})|\ge t\big)\le 4\exp\big(-t\,e^{(\log\log n)^2}\big).
$$
\end{theorem}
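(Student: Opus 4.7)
The plan is to prove Theorem \ref{thm:optimized} by combining a quantified version of the algorithm method from \cite{QuenchedVoronoi} with a new exponential replacement for the Efron--Stein-type bound \eqref{eq:efronstein}, and then to iterate the two in alternation. Because the algorithmic step requires, as input, the non-degeneracy of $\P(H_{R''}|\eta)$ for sub-rectangles $R''$ of aspect ratio $3{:}1$ (in the spirit of \eqref{eq:nondeg}), it is essential to work with Theorem \ref{thm:optimized} in its full generality --- uniformly over $\rho\in(\theta,1/\theta)$ and over all axis-parallel sub-rectangles $R'\subseteq R(\rho)$ of area at least $\theta$ --- so that the induction closes on itself.

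\textbf{Step 1: algorithmic bound on conditional influences.} Given $\eta$, I would design a randomized algorithm $\mathcal{A}_{R'}$ that determines $H_{R'}$ by sequentially exploring red/blue interfaces at a mesoscopic scale on which crossings of $3{:}1$ rectangles have non-degenerate conditional probability, following the blueprint from \cite{QuenchedVoronoi}. Applying the conditional form of the Schramm--Steif inequality \eqref{eq:SS} yields
\[
\sum_j \Inf_j(H_{R'}|\eta)^2 \le \delta(\mathcal{A}_{R'}|\eta),
\]
and the task is to show that, whenever the previous iteration already delivers a tail bound $\P_{R(\rho)}\big(|\P(H_{R''}|\eta)-\P_{R(\rho)}(H_{R''})|\ge t\big)\le f(t)$ uniformly over admissible $R''$, the revealment $\delta(\mathcal{A}_{R'}|\eta)$ is small with high $\P_{R(\rho)}$-probability. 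Quantitatively this reduces to a first-moment estimate on the total length of the interfaces inspected by $\mathcal{A}_{R'}$, which sharpens as $f$ does.

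\textbf{Step 2: exponential concentration inequality.} Viewing $X:=\P(H_{R'}|\eta)-\P_{R(\rho)}(H_{R'})$ as a function of the $n$ i.i.d.\ points of $\eta$, I would bound its moment generating function by an Efron--Stein-for-entropy / Bobkov--Ledoux style argument: resampling a single point of $\eta$ alters $X$ only through a locally bounded collection of Voronoi cells, and the resulting discrete-gradient term is controlled by a sum of conditional influences. Combined with the high-probability estimate for $\sum_j \Inf_j(H_{R'}|\eta)^2$ obtained in Step 1, together with a truncation that handles the small-probability event where that sum is large, this produces an exponential tail bound for $X$ of the form claimed in the theorem.

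\textbf{Iteration and main obstacle.} I would initialise the induction with the polynomial estimate \eqref{eq:AGMT} of \cite{QuenchedVoronoi}; a single application of Steps 1 and 2 already yields an inverse polynomial of arbitrarily high degree, and successive iterations compound into the super-polynomial factor $e^{(\log\log n)^2}$ after $\Theta((\log\log n)^2)$ rounds. The principal obstacle I anticipate is arranging Step 1 so that the dependence of $\delta(\mathcal{A}_{R'}|\eta)$ on the input function $f$ is quantitatively explicit and uniform over the entire admissible family of pairs $(\rho,R')$; without such uniformity the inductive constants degrade and the iteration cannot be closed. A secondary difficulty lies in the interplay between the two steps: the exponential estimate of Step 2 must produce, at each round, a strict and quantitative improvement over the input rate, even though the constants governing Step 1 will themselves worsen mildly as $f$ is strengthened.
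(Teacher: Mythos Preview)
Your plan is essentially the paper's own: the two pillars you describe are precisely Proposition~\ref{pt1_alt} (the Schramm--Steif revealment bound, fed by a quenched one-arm estimate built out of annuli of $3{:}1$ rectangles) and Proposition~\ref{pt2} (an Aida--Stroock style MGF iteration turning a high-probability bound on $\sum_j\Inf_j(H_{R'}|\eta)^2$ into an exponential tail for $\P(H_{R'}|\eta)$), and the paper closes the induction exactly by working uniformly over $\rho\in(\theta,1/\theta)$ and sub-rectangles $R'$ of area at least $\theta$, as you anticipated.

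Three points where your sketch drifts from the paper and would cost you in execution:
\begin{itemize}
\item The number of rounds is $\Theta(\log\log n)$, not $\Theta((\log\log n)^2)$. Each application of Steps~1 and~2 upgrades an input of the form $\exp(-\gamma_k(\log n)^k)$, valid for $n\ge N^{2^{k-1}}$, to $\exp(-\gamma_{k+1}(\log n)^{k+1})$, valid for $n\ge N^{2^k}$; the threshold \emph{squares} and the exponent's power of $\log n$ increases by one. Optimising $k$ against $n$ gives $k(n)\sim(\log\log n)/\log 2$, and it is the combination $(\log n)^{k(n)}$ that produces $e^{(\log\log n)^2}$.
\item You do not mention passing between the binomial and the Poisson models. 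The annulus argument behind the one-arm estimate (your Step~1) uses independence of disjoint regions, which holds for $\Po$ but not for $\P_{R(\rho)}$; the paper devotes Section~\ref{Comparison} to comparison lemmas (Lemmas~\ref{l:p_to_b}--\ref{l:b_to_p}) and invokes them each time the iteration crosses between models.
\item Allowing $R'$ to touch the boundary of $R(\rho)$ forces the one-arm estimate near edges and corners, where full annuli are unavailable. The paper handles this (Lemma~\ref{OneArmImp_alt}) by feeding in the half-plane quenched box-crossing input~\eqref{eq:q_rsw_halfplane} in addition to~\eqref{eq:q_rsw}; without this the induction does not close over the full family of admissible $(\rho,R')$.
\end{itemize}
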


Note that Theorem~\ref{Improved} is obtained from Theorem~\ref{thm:optimized} for $\rho=1$ and $R'=R(1)=S$. We remark that a similar (but weaker) extension from squares to rectangles was obtained also in~\cite{QuenchedVoronoi}.

The other complication of the iterative scheme is that the two pillars of the argument are most easily derived in somewhat different settings. The step in which we bound the sum of influences squared by means of a low-revealment algorithm is most conveniently carried out when $\eta$ is obtained as the positions of a Poisson point process on $\R^2$, due to the spatial independence between disjoint regions. The step in which the bound on the sum of influences squared is transformed into a bound on the deviations of $\P(H_R|\eta)$ from its mean will, on the other hand, be carried out for $\eta$ consisting of $n$ independent points. That is, we shall be working with two different, but closely related, models. Alternatively, which will facilitate comparison between the two models, it will be convenient to think of them as two probability measures associated to the same measurable space.\footnote{We have not been explicit what this measurable space is, but one can think of it as the space of locally finite subsets of $\R^2$, or more formally as the space of locally finite counting measures on $\R^2$, equipped with a suitable $\sigma$-algebra; see e.g.~\cite{laspen18} for details.}

\subsection{Poisson Voronoi percolation}

In Poisson Voronoi percolation, $\R^2$ is partitioned into an `occupied' and `vacant' set based on a two-colouring of the Voronoi tessellation of a unit rate Poisson point process.

Henceforth, we shall refer to the measure $\P_R$, in which $\eta$ consists of $n$ independently chosen uniform positions in $R$, as the \emph{binomial model}. Moreover, we let $\Po$ denote the measure associated to a unit rate Poisson point process $\eta$ on $\R^2$, and the subsequent `uniform' two-colouring (more precisely, the cells are equally likely to be either red or blue, independently of one another) of the Voronoi tessellation with respect to $\eta$, and refer to this as the \emph{Poisson model}. The two measures $\P_R$ and $\Po$, on the measurable space we can denote by $(\Omega,\mathcal{F})$, are supported on finite and infinite configurations $\eta$, respectively. In either case, and for any rectangle $R'$, the event $H_{R'}$ refers to the existence of a red left-right crossing of $R'$ in the Voronoi tessellation of $\R^2$ with respect to $\eta$.

A fundamental theorem of Bollob\' as and Riordan~\cite{bolrio06} shows that the uniform colouring is `critical' for Poisson Voronoi percolation. Clearly, the existence of a left-right crossing of a square has probability $1/2$ for uniform colourings, and in~\cite{bolrio06} the authors showed that, for non-uniform colourings, this probability tends rapidly to either zero or one as the side length of the square increases. Building on their work, Tassion~\cite{TassionRSW} established a `box-crossing property', showing that for every $\theta>0$ there exists $c=c(\theta)>0$ such that for all $\rho\in[\theta,1/\theta]$ and $n\ge1$
\begin{equation}\label{eq:rsw_intro}
c<\Po\big(H_{R(\rho,n)}\big)<1-c.
\end{equation}

We shall in this paper need to compare the binomial model to the Poisson models. The need for comparison between the two models arose already in~\cite{QuenchedVoronoi}, but we shall here need to be more careful as we shall need to pass back and forth between them repeatedly.
As a consequence of these comparison lemmas, which are presented in Section~\ref{Comparison}, we will obtain from Theorem~\ref{thm:optimized} an analogous result for Poisson Voronoi percolation.

\begin{theorem}\label{thm:poptimized}
For every $\theta>0$ there exists $N=N(\theta)$ such that for every $\rho\in [\theta,1/\theta]$, $n\ge N(\theta)$ and $t>0$ we have that
$$
\Po\big(|\P(H_{R(\rho,n)}|\eta)-\Po(H_{R(\rho,n)})|\ge t\big)\,\le\,  6\exp\big(-t\,e^{(\log\log n)^2}\big).
$$
\end{theorem}

Together with~\eqref{eq:rsw_intro} it follows, in particular, that for $\rho\in[\theta,1/\theta]$ and all large $n$ we have
$$
\Po\big(c/2<\P(H_{R(\rho,n)}|\eta)<1-c/2\big)\ge1-\exp\big(-e^{(\log\log n)^2}\big).
$$
A weaker bound of this kind was obtained already in~\cite{QuenchedVoronoi}, and put to use in the work of Vanneuville~\cite{vanneuville1,vanneuville2,vanneuville3} in a further study of quenched and annealed properties of Poisson Voronoi percolation.


%
%

\subsection{Spectral techniques, a brief comment}

It is worth to mention that most studies of noise sensitivity in percolation has up to this point (in part) rested upon spectral techniques and discrete Fourier analysis. Spectral techniques is the basis for both the BKS Theorem and the original Schramm-Steif Revealment Theorem (which we haven't stated in full here), and consequently for any study of noise sensitivity that rests on either of these two results. While spectral techniques have proven to be very powerful, their strength sometimes come at the cost of intuition. The first proof of noise sensitivity for Bernoulli percolation on $\Z^2$ without the use of spectral techniques came only recently, in the work of Tassion and Vanneuville~\cite{TV}.

In the appendix we provide a proof of the version of the Revealment Theorem stated in~\eqref{eq:SS} which is probabilistic in nature. We emphasise that this version of the theorem is weaker than the original, and on its own not sufficient to prove noise sensitivity e.g.\ for Bernoulli percolation on $\Z^2$. However, this version of the Revealment Theorem was sufficient, together with the variance-influence bound in~\eqref{eq:efronstein}, to settle the noise-related conjecture regarding Voronoi percolation from~\cite{BKS99}. Together with an exponential inequality substituting~\eqref{eq:efronstein}, this version of the Revealment Theorem will be sufficient also here. Hence, there proofs or the results of this paper, as well as the main results of~\cite{QuenchedVoronoi}, are probabilistic in nature and does not rely on spectral techniques.

\subsection{Open problems}

The conjecture from~\cite{BKS99}, which has inspiered this work, concerns Poisson Voronoi percolation in $\R^2$. There are other natural and well-studied models for percolation in the continuum for which similar conjectures could be posed, e.g.\ Poisson Boolean and confetti percolation. Let us exemplify what such a conjecture would entail in the context of Boolean percolation.

Consider a Poisson point process of intensity $\lambda>0$ in $\R^2$. Centered at the points we place discs with radii drawn independently from some probability distribution. (The radii could, for instance, take the values 1 and 2 with equal probability.) It is well-known that there exists a critical value $\lambda_c\in(0,\infty)$ at which the probability of a horizontal crossing of a large square consisting of overlapping discs remains bounded away from zero and one; see~\cite{ahltastei}. In analogy to the results reported on here, we conjecture that given the positions of the points, but not the radii of the discs, we have asymptotically no information of whether a large square is crossed or not.

While most of the techniques of this paper and~\cite{QuenchedVoronoi} should apply also to Poisson Boolean percolation, settling the conjecture remains an open problem in this setting. The main reason for this is that we are currently unaware of how to carry out the key first step that corresponds to proving a version\footnote{For a precise statement of the version of~\eqref{eq:nondeg} proved in~\cite{QuenchedVoronoi}, see~\eqref{eq:q_rsw}-\eqref{eq:q_rsw_halfplane} below.} of~\eqref{eq:nondeg}. The proof (see~\cite[Section~3]{QuenchedVoronoi}) in the context of Voronoi percolation crucially rests on a colour-switching argument that does not translate easily to Boolean or confetti percolation.

\subsection{Outline of the paper}

The remainder of this paper is organized as follows. In Section~\ref{Comparison}, we establish some lemmas that will allow us to pass between the two models. In Section~\ref{s:crossing}, we discuss the box-crossing property for Poisson Voronoi percolation on which our work builds. In Section~\ref{ArmRevealment}, we provide the first of the two main pillars that our argument builds, and provide a bound on the sum of influences squared via the analysis of an algorithm with low revealment. In Section~\ref{EfronStein}, we provide the second pillar, deducing a bound on the deviations of $\P(H_R|\eta)$ from its mean via the bound on the sum of influences squared and an exponential inequality improving upon~\eqref{eq:efronstein}. 

In Section~\ref{ImprovedResult}, we first prove a preliminary version of Theorem~\ref{thm:optimized} for rectangles $R'\subseteq R$ that are contained strictly within $R$, and in Section~\ref{s:boundary}, we prove a preliminary version of Theorem~\ref{thm:optimized} for arbitrary $R'\subseteq R$. It is not essential to first consider rectangles in the bulk of $R$ in Section~\ref{ImprovedResult}, and then arbitrary rectangles in Section~\ref{s:boundary}, but we believe that it will make the presentation easier to follow. In Section~\ref{s:optimized}, we complete the proof of Theorem~\ref{thm:optimized} as a corollary of the result obtained in Section~\ref{s:boundary}. Finally, in Section~\ref{sec:poptimized} we prove Theorem~\ref{thm:poptimized}, the analogous result in the Poisson version of the model.

\section{Comparison between the two models}\label{Comparison}

In this section we will prove a few lemmas comparing the binomial distribution to the Poisson distribution. Although the lemmas merely compare the two distributions, they will be used to compare the two models $\P_R$ and $\Po$, and are therefore stated in these terms.  We shall generally make this comparison with $R$ being a rectangle of area $n$, because in this case, $n$, the number of points in the $\P_R$ model is equal to the expected number of points in the Poisson model $\Po$.

Let $R'\subseteq R$ be two rectangles. We will let $\mathcal{F}_{R'}$ denote the sub-$\sigma$-algebra of events that are measurable with respect to the restriction of $\eta$ to $R'$. Let $A_m$ denote the event that the restriction of $\eta$ to $R'$ has size $m$, i.e.\ $A_m:=\{|\eta\cap R'|=m\}$. Note, in particular, that for any $m=0,1,\ldots,n$ the conditional measures $\P_R(\,\cdot\,|A_m)$ and $\Po(\,\cdot\,|A_m)$ coincide on $\mathcal{F}_{R'}$. This will be used repeatedly below, and we will simply write $\P(\,\cdot\,|A_m)$ for either of the two.

Our first lemma gives an upper bound on the probability in the binomial model as a multiple of that of the Poisson model, and states that the two probabilities differ by $o(1)$ as $n\to \infty$.

\begin{lemma}\label{l:p_to_b}
Let $R$ be any rectangle of area $n$ and let $R'\subseteq R$ be any rectangle of area $n/2$. For any event $E\in\mathcal{F}_{R'}$ we have
$$
\P_R(E)\le 2\,\Po(E).
$$
\end{lemma}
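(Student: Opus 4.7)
The plan is to condition on the number of points falling in $R'$ and reduce to a pointwise comparison of the Binomial and Poisson point masses. Write $A_m = \{|\eta \cap R'| = m\}$. Since $R'$ has area $n/2$ and $R$ has area $n$, under $\P_R$ the count $|\eta \cap R'|$ is $\mathrm{Bin}(n,1/2)$ (each of the $n$ iid uniform points of $\eta$ lies in $R'$ with probability $|R'|/|R|=1/2$), while under $\Po$ it is $\mathrm{Po}(n/2)$. In either model, conditional on $A_m$ the $m$ points of $\eta \cap R'$ are iid uniform on $R'$, and the colouring of the tessellation is a uniform two-colouring in both; so for any $E\in\mathcal{F}_{R'}$ the conditional probabilities agree, $p_m := \P(E\mid A_m)$. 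Therefore
\[
\P_R(E) \;=\; \sum_{m=0}^{n} \P_R(A_m)\, p_m, \qquad \Po(E) \;=\; \sum_{m=0}^{\infty} \Po(A_m)\, p_m,
\]
and it suffices to prove the pointwise bound $\P_R(A_m) \le 2\,\Po(A_m)$ for every $m\in\{0,1,\ldots,n\}$: summing over $m$ then gives the lemma.

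What remains is the analytic claim about the ratio
\[
\frac{\P_R(A_m)}{\Po(A_m)} \;=\; \frac{\binom{n}{m}2^{-n}}{e^{-n/2}(n/2)^m/m!} \;=\; \prod_{j=0}^{m-1}\Bigl(1-\tfrac{j}{n}\Bigr)\cdot\frac{e^{n/2}}{2^{n-m}}.
\]
Computing successive ratios gives $r(n,m+1)/r(n,m) = 2(1-m/n)$, so the ratio is unimodal in $m$ with mode at $\lfloor n/2\rfloor$ (or $\lceil n/2\rceil$). At this peak, Stirling's formula gives $\binom{n}{\lfloor n/2\rfloor}2^{-n}\sim\sqrt{2/(\pi n)}$ and $e^{-n/2}(n/2)^{\lfloor n/2\rfloor}/\lfloor n/2\rfloor!\sim 1/\sqrt{\pi n}$, so the maximum value of the ratio tends to $\sqrt{2}$ as $n\to\infty$. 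The bound $2$ is therefore comfortable for all large $n$, and for small $n$ one verifies it by direct evaluation.

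The only real work is the Stirling-based verification of the pointwise bound with the explicit constant $2$; everything else is bookkeeping on the partition into the events $\{A_m\}_{m\ge 0}$, using that the two models agree on $\mathcal{F}_{R'}$ after conditioning on the count. I do not anticipate any genuine obstacle, only the need to make the Stirling estimate sharp enough to absorb all correction terms uniformly in $n$.
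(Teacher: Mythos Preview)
Your proposal is correct and follows essentially the same route as the paper: condition on $A_m=\{|\eta\cap R'|=m\}$, use that the conditional laws on $\mathcal{F}_{R'}$ agree, and reduce to the pointwise bound $\P_R(A_m)\le 2\,\Po(A_m)$, which is then verified via unimodality of the ratio and Stirling. The only difference is that the paper carries out the Stirling estimate uniformly in $n$ (obtaining $\pi_m\le e/\sqrt{\pi}<2$ for all $m\le n/2$), rather than arguing asymptotically and deferring small $n$ to a separate check; as you yourself note, this is a routine sharpening with no real obstacle.
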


\begin{proof}
We will show, for all $m=0,1,...,n$, that
\begin{equation}\label{eq:pibound}
    \P_R(A_m)\le 2\,\Po(A_m).
\end{equation}
From~\eqref{eq:pibound} it follows that
$$
\P_R(E)=\sum_{m=0}^n\P(E|A_m)\P_R(A_m)\le2\sum_{m=0}^n\P(E|A_m)\Po(A_m)\le2\,\Po(E).
$$
We thus prove~\eqref{eq:pibound}.

Note that since $R'$ has area $n/2$, which is half of that of $R$, we have
\begin{equation}\label{eq:pidef}
\pi_m:=\frac{\P_R(A_m)}{\Po(A_m)}={n\choose m}\frac{2^{-n}m!}{(n/2)^{m}e^{-n/2}}=\frac{n!}{(n-m)!}\frac{e^{n/2}}{2^{n-m}n^m}.
\end{equation}
Note further that for $m\ge n/2$ we have
$$
\frac{\pi_{m+1}}{\pi_m}=2\frac{n-m}{n}\le1,
$$
so $\pi_m$ takes it maximum for $m\le n/2$. Moreover, for $m\le n/2$, Stirling approximation gives
$$
\pi_m\le\frac{e}{\sqrt{2\pi}}\Big(\frac{n}{n-m}\Big)^{n-m+1/2}\frac{e^{n/2-m}}{2^{n-m}}\le\frac{e}{\sqrt{2\pi}}\sqrt{\frac{n}{n-m}}\Big(1-\frac{n/2-m}{n-m}\Big)^{n-m}e^{n/2-m}\le\frac{e}{\sqrt{\pi}}\le2,
$$
where we in the second-to-last step have used that $(1-x)\le e^{-x}$.
This proves~\eqref{eq:pibound}.
\end{proof}


The next lemma provides a bound in the opposite direction.

\begin{lemma}\label{l:p_to_b_lower}
Let $R$ be any rectangle of area $n$ and let $R'\subset R$ be any rectangle of area $n/2$. For every $E\in\mathcal{F}_{R'}$ and $n\ge 36/\Po(E)$ we have
$$
\P_R(E)\ge \frac{\Po(E)}{4}e^{-3/\Po(E)}.
$$
\end{lemma}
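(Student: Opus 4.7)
The plan is to mimic the argument for Lemma~\ref{l:p_to_b}, but run the inequality in the reverse direction. Writing $\pi_m := \P_R(A_m)/\Po(A_m)$ (so $\pi_m = 0$ for $m > n$) and $q_m := \P(E|A_m)$, we decompose
$$
\P_R(E) = \sum_{m=0}^n q_m \Po(A_m)\pi_m \;\ge\; \pi^*\sum_{m \in I}q_m \Po(A_m) \;\ge\; \pi^*\bigl(\Po(E) - \Po(|X - n/2| > t)\bigr),
$$
where $I := \{m : |m - n/2| \le t\}$, $\pi^* := \min_{m \in I}\pi_m$, and $X = |\eta \cap R'|$ is $\mathrm{Poisson}(n/2)$ under $\Po$. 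The strategy is to choose $t$ so that the Poisson tail is at most $\Po(E)/2$, reducing the problem to a lower bound on $\pi^*$: one would then conclude $\P_R(E) \ge \pi^*\Po(E)/2$.

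As in the preceding proof, the recursion $\pi_{m+1}/\pi_m = 2(n - m)/n$ shows that $\pi_m$ is maximised at $m = n/2$, and using $\log(1-x) \ge -2x$ on $[0, 1/2]$ gives
$$
\log\bigl(\pi_{n/2 + k}/\pi_{n/2}\bigr) = \sum_{j=0}^{|k|-1}\log(1 - 2j/n) \;\ge\; -2k^2/n \quad \text{for}\ |k| \le n/4.
$$
Stirling additionally yields $\pi_{n/2} \ge 1$ for $n$ sufficiently large, so $\pi_m \ge e^{-2(m-n/2)^2/n}$ on $I$. For the Poisson tail we use the Chernoff bound $\Po(|X - n/2| > t) \le 2\exp(-t^2/(n + 2t/3))$ and take $t = \alpha\sqrt{n\log(4/\Po(E))}$ for a constant $\alpha$ slightly larger than $1$. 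The hypothesis $n \ge 36/\Po(E)$ ensures both $t \le n/4$ and, since $\Po(E)\log(4/\Po(E)) \le 4/e$, that the ratio $L/n$ (with $L = \log(4/\Po(E))$) is small enough for the $2t/3$ correction in the denominator to be absorbed by a modest inflation factor $\alpha$.

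Carrying out the computation yields a polynomial bound of the form $\P_R(E) \ge c\,\Po(E)^{\alpha^*}$ with $\alpha^*$ only slightly larger than $3$, which is strictly stronger than the stated conclusion: indeed, $x^{\alpha^* - 1}/\mathrm{const} \ge e^{-3/x}$ on $(0, 1]$ whenever the minimum value of $(\alpha^* - 1)\log x + 3/x$ (attained at $x = 3/(\alpha^* - 1)$) exceeds the corresponding logarithmic constant, which for $\alpha^*$ close to $3$ is routine. The main obstacle is the explicit numerical bookkeeping: one has to verify that the threshold $n \ge 36/\Po(E)$ is sharp enough for the lower-order corrections (the $2t/3$ term in the Chernoff tail, the Stirling error in $\pi_{n/2}$, and the $t \le n/4$ requirement) to combine compatibly and yield the precise exponent $3$ and constant $1/4$ in the statement.
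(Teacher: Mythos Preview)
Your decomposition is exactly the one the paper uses: restrict to a window $I=\{m:|m-n/2|\le a\sqrt n\}$, lower-bound $\pi_m$ on $I$, and discard the Poisson tail outside $I$. The difference is that you reach for Chernoff where the paper uses the cruder Chebyshev, and this is precisely what makes the paper's constants fall out with no bookkeeping.

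In the paper's argument, Chebyshev on $X\sim\mathrm{Poisson}(n/2)$ gives $\Po(|X-n/2|>a\sqrt n)\le 1/(2a^2)$. For $\pi_m$ they apply Stirling directly, using $1-x\ge e^{-x-x^2}$ for $x\le 1/2$, to obtain
\[
\pi_m\;\ge\;\frac{\sqrt{2\pi}}{e}\exp\Bigl(-\frac{(n/2-m)^2}{n-m}\Bigr)\;\ge\;\tfrac12\,e^{-3a^2}\qquad\text{for }|m-n/2|\le a\sqrt n,\ a\le\sqrt n/6.
\]
Now simply set $a=\Po(E)^{-1/2}$: the tail is at most $\Po(E)/2$, the floor on $\pi_m$ is $\tfrac12 e^{-3/\Po(E)}$, and the product is exactly $\tfrac14\Po(E)\,e^{-3/\Po(E)}$. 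The side condition $a\le\sqrt n/6$ is literally $n\ge 36/\Po(E)$. No tuning, no optimisation in $\alpha$.

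Your Chernoff route is valid and in principle yields a sharper polynomial lower bound, but it forces you into the balancing act you describe: $\alpha$ must be large enough for the Chernoff denominator correction not to overwhelm the threshold $n\ge 36/\Po(E)$, yet small enough that $(x/4)^{2\alpha^2}\cdot x/2$ still dominates $\tfrac{x}{4}e^{-3/x}$ near $x=1$. That window exists but is narrow, and verifying it is exactly the ``numerical bookkeeping'' you flag as the obstacle. The Chebyshev choice sidesteps all of this.
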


\begin{proof}
Let $0<a\le\sqrt{n}/6$ be a constant and let $I:=\{m\in\N:|m-n/2|\le a\sqrt{n}\}$. Then Chebyshev's inequality gives
\begin{equation}\label{eq:abound1}
\Po\bigg(\bigcup_{m\not\in I}A_m\bigg)=\Po\Big(\big||\eta\cap R'|-n/2\big|>a\sqrt{n}\Big)\le\frac{1}{2a^2}.
\end{equation}
We will show that for $m\in I$ and $a\le\sqrt{n}/6$ we have
\begin{equation}\label{eq:abound2}
    \P_R(A_m)\ge\frac12e^{-3a^2}\Po(A_m).
\end{equation}
From~\eqref{eq:abound1} and~\eqref{eq:abound2} it follows that
$$
\P_R(E)=\sum_{m=1}^n\P(E|A_m)\P_R(A_m)\ge\frac12e^{-3a^2}\sum_{m\in I}\P(E|A_m)\Po(A_m)\ge\frac12e^{-3a^2}\Big(\Po(E)-\frac{1}{2a^2}\Big).
$$
Setting $a=\Po(E)^{-1/2}$ then gives the claimed bound for $n\ge36/\Po(E)$.

Recall the definition of $\pi_m$ in~\eqref{eq:pidef}. Stirling approximation similarly gives the lower bound
$$
\pi_m\ge\frac{\sqrt{2\pi}}{e}\sqrt{\frac{n}{n-m}}\Big(1-\frac{n/2-m}{n-m}\Big)^{n-m}e^{n/2-m}.
$$
Using that $1-x\ge\exp(-x^2-x)$ for $x\le1/2$, we obtain for $m\in I$ and $\sqrt{n}\ge6a$ that
$$
\pi_m\ge\frac{\sqrt{2\pi}}{e}\exp\Big(-\frac{(n/2-m)^2}{n-m}\Big)\ge\frac{\sqrt{2\pi}}{e}\exp\Big(-\frac{a^2n}{n/2-a\sqrt{n}}\Big)
\ge\frac{\sqrt{2\pi}}{e}\exp(-3a^2),
$$
which proves~\eqref{eq:abound2}.
\end{proof}

Our final lemma gives a bound on the Poisson model in terms of the binomial model.

\begin{lemma}\label{l:b_to_p}
For every rectangle $R$, $E\in\mathcal{F}_{R}$ and $N\ge1$ we have
$$
\Po(E)\le\sup_{n\ge N}\P_R(E)+\Po(|\eta\cap R|<N).
$$
\end{lemma}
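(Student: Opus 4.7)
The plan is to condition the Poisson model on the number of points falling inside $R$. As recalled at the start of this section, conditional on $A_m := \{|\eta \cap R| = m\}$, the Poisson measure restricted to $\mathcal{F}_R$ agrees with the binomial measure in which the number of points has been set to $m$. I would therefore write
$$\Po(E) = \sum_{m=0}^\infty \P(E\mid A_m)\,\Po(A_m),$$
where $\P(E\mid A_m)$ equals the value of $\P_R(E)$ with the parameter $n$ replaced by $m$.

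The next step is to split the sum at $N$. For $m < N$ I use $\P(E\mid A_m)\le 1$, yielding a contribution of at most $\sum_{m<N}\Po(A_m) = \Po(|\eta\cap R| < N)$. For $m \ge N$ I pull out the supremum:
$$\sum_{m \ge N}\P(E\mid A_m)\,\Po(A_m) \le \Big(\sup_{m\ge N}\P(E\mid A_m)\Big)\sum_{m \ge N}\Po(A_m) \le \sup_{n\ge N}\P_R(E),$$
since $\sum_{m\ge N}\Po(A_m) \le 1$ and, by the identification above, $\P(E\mid A_m)$ is precisely the value of $\P_R(E)$ with $n=m$. Adding the two contributions gives the stated inequality.

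There is no substantial obstacle: the argument is a conditioning decomposition resting on the standard fact that a Poisson process, conditioned on its count in a bounded region, is distributed as an i.i.d.\ uniform sample of that size, which is exactly what defines the binomial model $\P_R$. The use of $\sup_{n\ge N}$ rather than an average is the natural way to decouple the $m$-dependence of $\P(E\mid A_m)$ from the Poisson weights.
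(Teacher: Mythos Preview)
Your proof is correct and follows essentially the same approach as the paper: condition on the number of Poisson points in $R$, use that the conditional measure coincides with the binomial model $\P_R$ (with $n$ equal to that count), split the sum at $N$, and bound the two pieces by $\sup_{n\ge N}\P_R(E)$ and $\Po(|\eta\cap R|<N)$ respectively. The paper's write-up is just a more compressed version of the same computation.
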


\begin{proof}
This time, we let $B_n:=\{|\eta\cap R|=n\}$. Then,
$$
    \Po(E)=\sum_{n\ge0}\P(E|B_n)\Po(B_n)=\sum_{n\ge0}\P_R(E)\Po(B_n)\le\sup_{n\ge N}\P_R(E)+\Po\bigg(\bigcup_{n<N}B_n\bigg),
$$
as required.
\end{proof}

\section{Crossing probabilities}\label{s:crossing}

RSW techniques, which make it possible to extend a crossing of a square to a crossing of a rectangle, are central in order to understand critical behaviour of planar percolation models.
For Voronoi percolation, an RSW theory was developed by Tassion~\cite{TassionRSW}, building on preliminary work of Bollob\'as and Riordan~\cite{bolrio06}. As a consequence, Tassion~\cite{TassionRSW} derived the box-crossing property for Poisson Voronoi percolation on $\R^2$, saying that for every $\theta>0$ there exists $c=c(\theta)>0$ such that for all $\rho\in[\theta,1/\theta]$ and $n\ge1$
\begin{equation}\label{eq:rsw}
c<\Po\big(H_{R(\rho,n)}\big)<1-c.
\end{equation}
The box-crossing property was extended in~\cite{QuenchedVoronoi} to tessellations of the half-plane $\mathbb{H}:=[0,\infty)\times\R$, in which boundary effects arise. Let
$$
R_0(\rho,n):=\big[0,\sqrt{\rho n}\big]\times\Big[-\frac12\sqrt{n/\rho},\frac12\sqrt{n/\rho}\Big]
$$
denote the rectangle $R(\rho,n)$ shifted so that its left side coincides with the vertical axis, and for every $R\subseteq\mathbb{H}$ let $H^\ast_R$ denote the event of a red horizontal crossing of $R$ with respect to the Voronoi tessellation generated by the restriction of $\eta$ to $\mathbb{H}$. In~\cite{QuenchedVoronoi}, it was proven that for every $\theta>0$ there exists $c'=c'(\theta)>0$ such that for all $\rho\in[\theta,1/\theta]$ and $n\ge1$
\begin{equation}\label{eq:rsw_halfplane}
c'<\Po\big(H^\ast_{R_0(\rho,n)}\big)<1-c'.
\end{equation}

As mentioned in the introduction, as a key step in settling the conjecture from~\cite{BKS99}, the authors of~\cite{QuenchedVoronoi} derived a preliminary `quenched' box-crossing estimate: For every $\theta>0$ and $\eps>0$ there exists $\delta>0$ such that for all $\rho\in[\theta,1/\theta]$ and $n\ge1$
\begin{equation}\label{eq:q_rsw}
\Po\Big(\P\big(H_{R(\rho,n)}|\eta\big)\in(\delta,1-\delta)\Big)>1-\eps.
\end{equation}
The analogous statement was shown to hold also for the half-plane, in that for every $\theta>0$ and $\eps>0$ there exists $\delta>0$ such that for all $\rho\in[\theta,1/\theta]$ and $n\ge1$
\begin{equation}\label{eq:q_rsw_halfplane}
\Po\Big(\P\big(H^\ast_{R_0(\rho,n)}|\eta\big)\in(\delta,1-\delta)\Big)>1-\eps.
\end{equation}
The estimates in~\eqref{eq:q_rsw} and~\eqref{eq:q_rsw_halfplane} will be key also in the present paper, and will be used to get the induction argument, in the proof of our main theorem, started.

The box-crossing property for the half-plane (that is~\eqref{eq:rsw_halfplane}) effectively provides information about tessellations of other subsets of $\R^2$. We illustrate this fact, and the use of the comparison lemmas from the previous section, by establishing a box-crossing property for the binomial model.

\begin{proposition}\label{p:rsw}
For every $\theta>0$ there exists $c''=c''(\theta)$ and $K=K(\theta)$ such that for all $\rho\in[\theta,1/\theta]$, $n\ge K$ and (axis parallel) rectangle $R'\subseteq R(\rho)$ of area at least $\theta$ we have
$$
c''<\P_{R(\rho)}\big(H_{R'}\big)<1-c''.
$$
\end{proposition}

\begin{proof}
We will consider tessellations of $R(\rho,n)$ is order to allow for comparisons between $\P_{R(\rho,n)}$ and $\Po$.
Given a rectangle $R\subseteq R(\rho,n)$, let $H^{\ast\ast}_R$ denote the event of a red horizontal crossing of $R$, and $V^{\ast\ast}_R$ the event of a red vertical crossing of $R$, in the Voronoi tessellation of the restriction of $\eta$ to $R(\rho,n)$.

In the tessellation of a rectangle, we will see boundary effects from its four sides. The restriction of this tessellation to a rectangle $R'\subseteq R(\rho,n)$ that is aligned with e.g.\ the left side of $R(\rho,n)$, but do not touch the other three sides of $R(\rho,n)$, will (for large $n$) coincide with that of a half-plane. This will allow us to deduce the claim of the proposition from~\eqref{eq:rsw_halfplane}.

We shall first consider $R'$ of the form $R'=[-\frac12\sqrt{\rho n},\frac12\sqrt{\rho n}]\times[-\frac a2\sqrt{n/\rho},\frac a2\sqrt{n/\rho}]$, where $a\in(0,1/4),$ and $a<\rho$ is a fixed constant. Denote by $S'$ the square $[-\frac a2\sqrt{n/\rho},\frac a2\sqrt{n/\rho}]^2$, and denote by $R'_1$ the rectangle $[-\frac12\sqrt{\rho n},\frac a2\sqrt{n/\rho}]\times[-\frac a2\sqrt{n/\rho},\frac a2\sqrt{n/\rho}]$ and $R'_2$ the reflection of $R'_1$ in the vertical axis. Note that a red horizontal crossing of each of $R'_1$ and $R'_2$, together with a red vertical crossing of $S'$, results in a red horizontal crossing of $R'$. That is,
$$
H^{\ast\ast}_{R'_1}\cap H^{\ast\ast}_{R'_2}\cap V^{\ast\ast}_{S'}\subseteq H^{\ast\ast}_{R'},
$$
so that by the FKG-inequality for Poisson Voronoi percolation (see e.g.~\cite[Lemma~8.3]{bollobás_riordan_2006})
\begin{equation}\label{eq:R'lower}
\Po\big(H^{\ast\ast}_{R'}\big)\ge\Po\big(H^{\ast\ast}_{R'_1}\big)\,\Po\big(H^{\ast\ast}_{R'_2}\big)\,\Po\big(V^{\ast\ast}_{S'}\big)=\Po\big(H^{\ast\ast}_{R'_1}\big)^2\,\Po\big(V^{\ast\ast}_{S'}\big).
\end{equation}

Next, cover\footnote{By a \emph{covering} of a set $S\subseteq\R^2$ we refer to a collection of subsets of $S$ whose union equals $S$.} $S'$ by at most $2a^2\sqrt{n}/\rho$ squares of area $\sqrt{n}$, and let $E_n$ denote the event that each square in the covering contains a point of $\eta$. Note that on $E_n$, the events $V_{S'}$ and $V^{\ast\ast}_{S'}$ are determined by the restriction of $\eta$ to a $2n^{1/4}$-neighbourhood of $S'$, and thus coincide. It follows that
$$
\Po\big(V^{\ast\ast}_{S'}\big)\ge\Po\big(V^{\ast\ast}_{S'}\cap E_n\big)=\Po\big(V_{S'}\cap E_n\big)\ge\Po\big(V_{S'}\big)-\Po(E_n)\ge\frac12-\frac{2a^{2}}{\rho}\sqrt{n}e^{-\sqrt{n}}\ge\frac14,
$$
for large $n$ (depending on $a$). An analogous argument, together with~\eqref{eq:rsw_halfplane}, shows that there exists $c'=c'(a,\theta)>0$ such that $\Po(H^{\ast\ast}_{R'_1})\ge c'/2$ for large $n$ (depending on $a$ and $\theta$). By~\eqref{eq:R'lower} we obtain that $\Po(H^{\ast\ast}_{R'})\ge (c'/2)^2/4$ for all large $n$.

Finally, we cover $R'$ by at most $2a\sqrt{ n}$ squares of area $\sqrt{n}$, and let $F_n$ denote the event that each square in the cover contains a point of $\eta$. Again, $\Po(H^{\ast\ast}_{R^\prime}\cap F_n)\ge (c')^2/32$ for large $n$, and Lemma~\ref{l:p_to_b_lower} gives
$$
\P_{R(\rho,n)}\big(H_{R'}\big)=\P_{R(\rho,n)}\big(H^{\ast\ast}_{R'}\big)\ge\P_{R(\rho,n)}\big(H^{\ast\ast}_{R'}\cap F_n\big)\ge c'',
$$
where $c'':=\frac{(c')^2}{128}e^{-96/(c')^2}$.

We have now established the lower bound of the proposition for $R'$ as defined above. The same proof gives the same lower bound also for vertical translates $R''=(0,\frac{b}{2}\sqrt{n/\rho})+R'$ of $R'$, as long as $b\in[-(1-2a),1-2a]$, so that $R''$ do not touch the top or bottom of $R(\rho,n)$. Since for any rectangle $R\subseteq R(\rho,n)$ of area at least $\theta$ will may choose $a$ and $b$ (with $a>0$ uniformly) such that a horizontal crossing of $R''$ implies a horizontal crossing of $R$, this establishes the lower bound of the proposition.

The upper bound follows from an analogous argument, by considering vertical crossings of $R(\rho,n)$.
\end{proof}

\section{Bound on the sum of influences squared}\label{ArmRevealment}

In this section we show how to obtain a bound on the sum of influences squared from a bound on the crossing probability of a rectangle. We will follow a standard approach, and our exposition is close to that of~\cite{QuenchedVoronoi}, with the exception that we quantify with greater care the error bounds obtained.

\begin{proposition}\label{pt1}
Suppose that there exist constants $c,\alpha>0$ and $k, L \geq 1$ such that for all $n \geq L$
\begin{equation}\label{eqn:stp1}
 \Po\Big(c<\P(H_{R(3,n)}|\eta)<1-c\Big) \geq 1 - e^{-\alpha(\log n)^{k}}.
\end{equation}
Then, for every $\theta\in(0,1)$, there exist constants $\eps=\eps(c)$, $L'=L'(\alpha,k)$ and $L''=L''(\theta)$ such that for all $\rho \in [\theta,1/\theta]$ and $n\geq\max\{L^2,L',L''\}$
$$\P_{R(\rho)}\bigg( \sum_{j=1}^{n} \Inf_j\big(H_{R(\rho,1/4)}|\eta\big)^2\geq n^{-\eps}\bigg)\leq e^{-\frac{\alpha}{200}(\log n)^{k+1}}.$$
\end{proposition}

\begin{remark}\label{r:sq}
In applications of Proposition~\ref{pt1} the maximum will generally be obtained by the $L^2$, so that we may take $n=L^2$.  See Remark~\ref{r:N} for details.
\end{remark}

To prove the proposition, we will rely on the Revealment Theorem from~\cite{SS10} to bound the sum of influences squared in terms of the revealment of a random algorithm. To bound the revealment we need the following bound on the so-called one-arm event. Let $B(u,d)$ denote the $\ell_\infty$-ball $u+[-d,d]^2$, centered at $u$ with radius $d$. Let $V_u(a,b)$ denote the event that there is a red path (contained in $R(\rho,n)$) connecting $B(u,a)$ to $B(u,b)^c$.

\begin{lemma}\label{OneArmImp}
Suppose that there exist constants $c,\alpha>0$ and $k, L \geq 1$ such that~\eqref{eqn:stp1} holds for all $n \geq L$. Then, there exist constants $\eps=\eps(c)$, $L'(\alpha,k)$ and $L''(\theta)$ such that the following holds: For every $\rho\in[\theta,1/\theta]$ and $u\in R(\rho,n/4)$ we have for $n\ge \max\{L^2,L'(\alpha,k),L''(\theta)\}$ that
$$
\P_{R(\rho,n)}\Big(\P\big(V_u(n^{1/4}, n^{1/3})\big|\eta\big)\geq n^{-\eps}\Big)\leq e^{-\frac{\alpha}{100} (\log n)^{k+1}}.
$$
\end{lemma}

As mentioned in the introduction, it will sometimes be convenient to work with the Poisson model, due to its spatial independence property. For this reason we shall work with the Poisson model for the bulk of the proof of the lemma.
However, to better fit our later needs, we convert to the binomial model in the end of the proof.
To facilitate the comparison between the two models, the lemma is stated in terms of a rectangle of area $n$.

\begin{proof}[Proof of Lemma~\ref{OneArmImp}]
 Fix $u\in R(\rho,n/4)$. For each $j\in \N$, let $A_{j}$ denote the square annulus, centered at $u$, with inner side-length $7^{j}$ and outer side-length $3\cdot7^{j}$, so that $A_j=u+[-\frac327^j,\frac327^j]\setminus[-\frac127^j,\frac127^j]$. Let $O_{j}$ be the event that there is no red path connecting the inner and outer boundaries of the annulus $A_j$, which by duality means that there exists a blue circuit in $A_{j}$ surrounding $u$. Let 
 $$ J := \Big\{ j\in \N: n^{1/4} \leq 7^j \leq  \frac12n^{1/3}\Big\}. $$
 For $j\in J$, let $D_{j}'$ denote the event that $\P(O_{j}|\eta)>c^4$, where $c>0$ is the constant in~\eqref{eqn:stp1}, and let $D_{j}''$ denote the event that for every $z\in A_{j}$ there exists some point in $\eta$ at $\ell_2$-distance at most $n^{1/6}$ from $z$. Let
 $$D_{j}:= D_{j}' \cap D_{j}'',$$
 and note that the events $D_{j}$ are independent with respect to the Poisson model; $D_j$ depends on the restriction of $\eta$ to a $n^{1/6}$-neighbourhood of $A_j$, and for different $j$ these regions are disjoint.
 
 For later reference, we specify $L'(\alpha,k)$ to be the least integer such that for all $n\ge L'(\alpha,k)$ we have
 \begin{equation}\label{eq:N(a,k)}
 18n^{1/3}\le e^{\frac14n^{1/3}}\quad\text{and}\quad 3000\le \alpha(\log n)^k\le\frac14n^{1/3}.
 \end{equation}
 Next, observe that $A_{j}$ can be covered by $4$ rectangles with aspect ratio $3:1$, in such a way that (blue) crossings of each of these rectangles imply a (blue) circuit in $A_j$. Thus, by the (Harris-)FKG inequality, combined with condition~$\eqref{eqn:stp1}$ and the union bound, we have 
 $$ \Po\big( \P(O_{j}|\eta)\leq c^4\big) \leq 4e^{-\alpha(\log n)^{k}}$$
 for all $n\geq L^2$. (Note that for $n\ge L^2$ the area $3\cdot 7^{2j}\ge3n^{1/2}$ of each of these rectangles exceeds $L$).
 We further observe that we may cover\footnote{Recall that by a covering of a set $A\subseteq\R^2$ we refer to a collection of subsets of $A$ whose union equals $A$.} $A_j$ by at most $18n^{1/3}$ squares of area $\frac12n^{1/3}$, and that the event $D_j''$ may fail only if one of these squares contains no point of $\eta$. Consequently, using the union bound, we have
 \begin{equation}\label{ineq0}
  \Po(D_{j}'')\geq1-18n^{1/3}e^{-\frac12n^{1/3}}
  \geq 1-e^{-\frac14n^{1/3}}
  \geq 1-e^{-\alpha(\log n)^{k}},
  \end{equation}
 for all $n\ge L'(\alpha,k)$. So, $\Po(D_{j})\geq 1-5e^{-\alpha(\log n)^{k}}$, for $n\geq \max\{L^2,L'(\alpha,k)\}$.

 Let $J^\ast=J^\ast(\eta)$ denote the subset of indices $j\in J$ for which the event $D_j$ occurs, and let $D^{*}$ be the event that $|J^\ast|\ge|J|/2$. Since $|J|\ge\frac{1}{24}\log n$, we have on the event $D^{*}$ that
  $$\P\Big(V_u(n^{1/4}, n^{1/3})|\eta\Big)\leq \P\bigg(\bigcap_{j\in J}O^{c}_{j}\,\bigg|\,\eta\bigg)\le\P\bigg(\bigcap_{j\in J^\ast}O^{c}_{j}\,\bigg|\,\eta\bigg)=\prod_{j\in J^\ast}\P(O^{c}_{j}|\eta)\leq (1-c^4)^{|J|/2}\leq n^{-\eps},$$
  for some $\eps=\eps(c)>0$.
Moreover, since the events $D_j$ are independent with respect to $\Po$, we have
 \begin{equation}\label{ineq1}
 \Po(D^{*})\geq 1- 2^{|J|}\big(5e^{-\alpha(\log n)^{k}}\big)^{|J|/2}\geq 1-20^{\frac{1}{48}\log n}e^{-\frac{\alpha}{48}(\log n)^{k+1}}\geq 1 -e^{-\frac{\alpha}{96}(\log n)^{k+1}},
 \end{equation}
 for all $n\geq \max\{L^2,L'(\alpha,k)\}$ (so that, in particular, $\alpha(\log n)^k\ge6$).
 
 Finally, observe that, for some $L''(\theta)$, we have $D^{*}\in \F_{R(\rho,n/2)}$ for $n\ge L''(\theta)$ (taking $n\ge(16/\theta)^2$ will suffice uniformly over $\rho\in[\theta,1/\theta]$). By Lemma~$\ref{l:p_to_b}$ we thus obtain (by the choice of $L'(\alpha,k)$) that
 $$\P_{R(\rho,n)}\big(D^{*}\big)\geq1- 2 e^{-\frac{\alpha}{96} (\log n)^{k+1}}\geq1-e^{-\frac{\alpha}{100} (\log n)^{k+1}},$$
  for all $n\ge\max\{L^2,L'(\alpha,k),L''(\theta)\}$.
 \end{proof}
 
\begin{proof}[Proof of Proposition~\ref{pt1}]
We will rely on the Schramm-Steif Revealment Theorem, as stated in~\eqref{eq:SS}. To this end, let $\eta\sim\P_{R(\rho)}$ be a set of $n$ points.
Suppose that $\mathcal{A}$ is a random algorithm that, given $\eta$, queries the cells of the Voronoi tiling for their colour. We shall denote by $\delta(\mathcal{A}|\eta)$ the (conditional) revealment of $\mathcal{A}$ with respect to $\P(\,\cdot\,|\eta)$. If $\mathcal{A}$ determines whether the event $H_{R(\rho,1/4)}$ has occurred or not (with probability one), then~\eqref{eq:SS} gives that, almost surely,
\begin{equation}\label{eq:SSeta}
    \sum_{j=1}^n\Inf_j\big(H_{R(\rho,1/4)}|\eta\big)^2\le\delta(\mathcal{A}|\eta).
\end{equation}

The algorithm that we describe next is a version, due to Gady Kozma, of the algorithm devised in~\cite{BKS99}, which we have further adapted to the continuum.

\begin{definition}[The BKS-Kozma algorithm]
Let $\mathcal{A}$ be the algorithm that, given $\eta$, queries the cells of the Voronoi tessellation of $R(\rho)$ for their colour, in order to detect a crossing of $R(\rho,1/4)$, as follows:
\begin{enumerate}[\quad (i)]
    \item Choose a point in the middle third of the top side of $R(\rho,1/4)$ uniformly at random, and let $\ell$ denote the vertical line segment that connects that point to the bottom side of $R(\rho,1/4)$. Query every cell that intersects $\ell$, and declare these cells explored.
    \item Query any unexplored cell that has non-empty intersection with $R(\rho,1/4)$, and that is adjacent to a previously explored red cell. Repeat this step until no further cells of this kind exist.
\end{enumerate}
\end{definition}

Note that the algorithm detects all cells that are connected (within $R(\rho,1/4)$) to the line segment $\ell$ by a red path. In particular, the algorithm determines whether $H_{R(\rho,1/4)}$ occurs or not.

The revealment of the above algorithm relates to the one-arm event in the following sense. Cover $R(\rho,1/4)$ by $m\le8\sqrt{n}$ squares of area $1/(8\sqrt{n})$, and let $u_1,u_2,\ldots,u_m$ denote the centers of these squares.

\begin{claim}\label{claim:revealment}
There exists $L'''=L'''(\theta)$ so that for $n\ge L'''$, with probability at least $1-\exp(-\sqrt{n}/16)$, we have
\begin{equation}\label{eq:revealment}
\delta(\mathcal{A}|\eta) \leq \max_{1\le j\le m}\P\big(V_{u_j}(n^{-1/4},n^{-1/6})\big|\eta\big) + O(n^{-1/6}).
\end{equation}
\end{claim}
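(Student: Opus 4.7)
The plan is to bound $\P(\mathcal{A}\text{ queries }V(p)\mid\eta)$ for each $p\in\eta$ and then take the maximum. A cell can be queried in only two ways: either $V(p)$ meets the random vertical segment $\ell$ (phase~(i)), or there is a chain of pairwise Voronoi-adjacent red cells that meets $\ell$ at one end and is adjacent to $V(p)$ at the other (phase~(ii)).

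The preliminary step is to introduce a high-probability event $G$ on which every Voronoi cell with nucleus in a slight enlargement of $R(\rho,1/4)$ has diameter at most $r_n$, for some $r_n$ comfortably smaller than $n^{-1/4}$. I would obtain $G$ by the usual covering argument: tile the relevant region by $O(\sqrt{n})$ squares of area $1/(8\sqrt{n})$ and demand that each contains a point of $\eta$. Under $\P_{R(\rho)}$, each such square contains a point with probability at least $1-(1-1/(8\sqrt{n}))^n\geq 1-e^{-\sqrt{n}/8}$, so a union bound gives $\P_{R(\rho)}(G)\geq 1-\exp(-\sqrt{n}/16)$ for $n$ beyond some $L'''(\theta)$. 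The geometric consequence I need is that on $G$, for any $p\in\eta\cap Q_j$, the cell $V(p)$ together with all its Voronoi neighbours lies inside $B(u_j,n^{-1/4})$.

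Working on $G$, I would bound the two phases separately. The phase-(i) contribution $\P_\ell(V(p)\cap\ell\neq\emptyset)$ is at most the horizontal extent of $V(p)$ divided by the length $\sqrt{\rho}/6$ of the interval from which the upper endpoint of $\ell$ is drawn uniformly, which on $G$ is of order $n^{-1/4}$. For phase~(ii): if $V(p)\cap\ell=\emptyset$ but $V(p)$ is still queried, the chain of red cells is a connected red set meeting $\ell$ and adjacent to $V(p)$. On $G$, the latter cell lies in $B(u_j,n^{-1/4})$, so if additionally $\ell$ has horizontal distance at least $n^{-1/6}$ from $u_j$, the red chain exits $B(u_j,n^{-1/6})$, realising the one-arm event $V_{u_j}(n^{-1/4},n^{-1/6})$. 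Since the top endpoint of $\ell$ is uniform in an interval of length $\sqrt{\rho}/6$, the probability that $\ell$ falls within horizontal distance $n^{-1/6}$ of the fixed point $u_j$ is at most $12\,n^{-1/6}/\sqrt{\rho}=O(n^{-1/6})$. Assembling these three estimates and taking the maximum over $p$ yields
\[
\delta(\mathcal{A}\mid\eta)\leq O(n^{-1/6})+\max_{1\leq i\leq m}\P\big(V_{u_i}(n^{-1/4},n^{-1/6})\mid\eta\big),
\]
which is the desired inequality.

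The main obstacle is the geometric bookkeeping of scales: $r_n$ must be chosen small enough that both $V(p)$ and all its Voronoi neighbours sit inside $B(u_j,n^{-1/4})$ (so that the red chain genuinely realises the one-arm event at the radii written in the claim, not a slightly shifted version), yet large enough that the covering failure probability stays within the claimed bound $\exp(-\sqrt{n}/16)$ uniformly in $\rho\in(\theta,1/\theta)$. Once these constants are balanced, the remaining estimates are routine.
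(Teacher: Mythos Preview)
Your proposal is correct and follows essentially the same approach as the paper: the paper likewise defines a good event (there called $E_n$) via the identical covering of $R(\rho)$ by squares of area $1/(8\sqrt n)$, obtains the same failure probability $e^{-\sqrt n/16}$ by a union bound, and then argues that on $E_n$ a queried cell either has its nucleus within distance $2n^{-1/6}$ of $\ell$ (the $O(n^{-1/6})$ term, absorbing your phase~(i)) or witnesses a red arm realising $V_{u_j}(n^{-1/4},n^{-1/6})$. The only cosmetic difference is that the paper merges your phase~(i) into the ``close to $\ell$'' case rather than treating it separately, and handles the scale bookkeeping simply by noting that cell radii are at most $\tfrac12 n^{-1/4}$; your concern about balancing $r_n$ is therefore not a real obstacle.
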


\begin{proof}[Proof of claim]
Let $E_n$ be the event that every cell in the Voronoi tessellation of $R(\rho)$ has radius at most $\frac12n^{-1/4}$.  We shall prove that~\eqref{eq:revealment} holds on the event $E_n$, and then bound the probability that $E_n$ fails. 

On the event $E_n$, we argue that the probability for the cell of a point $u\in\eta$ to be queried is at most $P\big(V_{u_j}(n^{-1/4},n^{-1/6})\big|\eta\big) + O(n^{-1/6})$, where $u_j$ is the center of the square which contains $u$ (according to the above covering of $R(\rho,1/4)$). Indeed, either $u$ is within distance $2n^{-1/6}$ of $\ell$, which occurs with (conditional) probability at most $O(n^{-1/6})$, or there is a red path from a neighbouring cell of $u$ to $\ell$. Since the cell of $u$ has radius at most $\frac12n^{-1/4}$, this implies that the event $V_{u_j}(n^{-1/4},n^{-1/6})$ occurs, which has probability  $\P(V_{u_j}(n^{-1/4},n^{-1/6})|\eta)$, as required.

It remains to bound the probability that $E_n$ fails.
So, extend the covering of $R(\rho,1/4)$ to a covering of squares of all of $R(\rho)$ consisting of at most $16\sqrt{n}$ squares of area $1/(8\sqrt{n})$. Note that $E_n$ occurs if each square in the covering of $R(\rho)$ contains a point of $\eta$. A given square is empty with $\P_{R(\rho)}$-probability
$$
\big(1-1/(8\sqrt{n})\big)^n\le \exp(-\sqrt{n}/8).
$$
Hence, the conclusion of the lemma follows by the union bound, for large $n$.
\end{proof}

Using Claim~\ref{claim:revealment} and Lemma~\ref{OneArmImp} we obtain the following bound on the revealment of $\mathcal{A}$: There exists $\eps=\eps(c)<1/6$ such that
\begin{align*}
    \P_{R(\rho)}\big(\delta(\mathcal{A}|\eta)>2n^{-\eps}\big)&\le\P_{R(\rho)}\Big(\max_{1\le j\le m}\P\big(V_{u_j}(n^{-1/4},n^{-1/6})\big|\eta\big)>n^{-\eps}\Big)+e^{-\sqrt{n}/16}\\
    &\le\sum_{j=1}^m\P_{R(\rho)}\Big(\P\big(V_{u_j}(n^{-1/4},n^{-1/6})\big|\eta\big)>n^{-\eps}\Big)+e^{-\sqrt{n}/16}\\
    &\le(8\sqrt{n}+1)\exp\Big(-\frac{\alpha}{100}(\log n)^{k+1}\Big),
\end{align*}
for $n\ge\max\{L^2,L'(\alpha,k),L''(\theta),L'''(\theta)\}$. Using~\eqref{eq:N(a,k)}, the conclusion now follows from~\eqref{eq:SSeta}.
\end{proof}

\begin{remark}\label{r:a}
Proposition~\ref{pt1} is stated for $k\ge1$, but holds also for $k=0$ provided that $\alpha\ge6$, in order for~\eqref{ineq1} to hold.
\end{remark}

\begin{remark}\label{r:N}
Recall that $L'(\alpha,k)$ was defined as the least integer such that~\eqref{eq:N(a,k)} holds for all $n\ge L'(\alpha,k)$.
We shall later apply Proposition~\ref{pt1} iteratively, with $\alpha$ and $L$ of the form $\alpha=\gamma^{k-1}$ and $L=N^{2^{k-1}}$, where $\gamma\in(0,1)$ and $N\ge1$ are constants. For the iterative scheme to be useful, we need to control the rate at which $L'(\gamma^{k-1},k)$ grows with $k$. We claim that for every $\gamma\in(0,1)$ there exists $N\ge1$ such that
\begin{equation}\label{eq:Lbound}
L'(\gamma^{k-1},k)\le N^{2^{k-1}}\quad\text{for all }k\ge1.
\end{equation}

In order to verify~\eqref{eq:Lbound}, note that for the lower inequality in~\eqref{eq:N(a,k)} it will suffice that $\gamma\log N\ge3000$. We claim that if $N$ is the least integer such that $\log N\ge 96$, then also $4(\log n)^k\le n^{1/3}$ holds for all $k\ge1$ and $n\ge N^{2^{k-1}}$. Indeed, for $n=N^{2^{k-1}}$ the left-hand side is bounded by $2^{k(k-1)+5k+2}$, whereas the right-hand side is at least
$$
\exp\Big(\frac132^{k-1}\log N\Big)\ge\exp(2^{k+4})\ge\frac{(2^{k+4})^{2k}}{(2k)!}\ge2^{2k^2+4k}\frac{2^{k+4}}{(2k)^{2k}}\ge2^{k(k-1)+5k+2}.
$$
Moreover, increasing $n$ by a factor $c$ increases the right-hand side by a factor $c^{1/3}$, whereas the left-hand side increases by a factor
$$
\Big(1+\frac{\log c}{\log n}\Big)^k\le \exp\Big(k\,\frac{\log c}{\log n}\Big)\le \exp\Big(\frac{\log c}{\log N}\Big)\le \exp\Big(\frac{1}{32}\log c\Big)=c^{1/32}.
$$
This shows that, for any $\gamma\in(0,1)$, there exists $N=N(\gamma)$ such that $L'(\gamma^{k-1},k)\le N^{2^{k-1}}$.
\end{remark}

\section{Improved bound on deviations from the mean}\label{EfronStein}

In this section we will revisit the relation between variance and influence that in~\cite{QuenchedVoronoi} led to the Efron-Stein-like inequality in~\eqref{eq:efronstein}. We shall here aim for an exponential version of~\eqref{eq:efronstein} by estimating the moment generating function of the conditional probability of crossing a rectangle.

\begin{proposition}\label{pt2}
Let $\rho\in(0,\infty)$ be fixed and let $R'$ be any rectangle contained in $R(\rho)$. Note that $R(1/\rho)$ is the rectangle obtained from $R(\rho)$ by rotating the plane by $\pi/2$, and let $R''$ be the rectangle obtained from $R'$ by the same rotation. Suppose that there exist $\eps,\beta>0$ and $k,M\geq1$ such that for all $n\ge M$
\begin{align}\label{eqn:stp2}
 \P_{R(\rho)}\bigg(\sum_{j=1}^{n} \Inf_j(H_{R'}|\eta)^2\geq n^{-\eps} \bigg) &\leq e^{-\beta(\log n)^{k}},\\
  \P_{R(1/\rho)}\bigg(\sum_{j=1}^{n} \Inf_j(H_{R''}|\eta)^2\geq n^{-\eps} \bigg) &\leq e^{-\beta(\log n)^{k}}. \label{eqn:stp2dual}
 \end{align}
Then, there exists $M'=M'(\eps,\beta,k)$ such that for all $n\geq\max\{M,M'\}$ and $t>0$ we have
$$
\P_{R(\rho)}\Big( \big|\P(H_{R'}|\eta)-\P_{R(\rho)}(H_{R'})\big| \geq t \Big) \leq 4\exp\Big(-t\,\frac{\beta}{4}(\log n)^{k}\Big).
$$
\end{proposition}

In order to describe our approach, let $Z$ be any random variable whose moment generating function $\E[e^{\lambda Z}]$ exists (at least for small $\lambda>0$). Define for $\lambda>0$ the function
$$
F(\lambda):=\E\Big[e^{\lambda(Z-\E[Z])}\Big].
$$
Then, Markov's inequality yields the bound
$$
\P\big(Z-\E[Z]\ge t\big)\le F(\lambda)\cdot e^{-\lambda t},\quad\text{for }\lambda>0.
$$
This expression is the basis for various Chernoff-like concentration bounds. With $Z:=\P(H_{R'}|\eta)$, our aim will be to show that $F$ is bounded for suitable values of $\lambda>0$, based on the relation between variance and influence explored in~\cite{QuenchedVoronoi}.
That is, we will aim to specify $\lambda$ (as a function of $n$) as large as we can, while $F(\lambda)$ remains bounded by a constant.
This approach is in~\cite{HSE} accredited to Aida and Stroock~\cite{AS94}, and our presentation is influenced by the exposition in~\cite[pages 70-71]{HSE}.

\begin{lemma}
Let $Z:=\P(H_{R'}|\eta)$. For all $\lambda>0$ we have 
\begin{equation}\label{eqn:ineq1}
\V_{R(\rho)}\big(e^{\lambda Z/2}\big) \leq \frac{\lambda^{2}}{4}\E_{R(\rho)}\Big[e^{\lambda Z}\sum_{j=1}^{n}e^{\lambda\cdot\Inf_{j}(H_{R'}|\eta)}\Inf_{j}(H_{R'}|\eta)^2\Big].
\end{equation}
\end{lemma}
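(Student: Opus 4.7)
The plan is to establish the lemma as an exponentially-tilted analogue of the Efron-Stein-like variance bound \eqref{eq:efronstein} from \cite{QuenchedVoronoi}, following the Aida-Stroock \cite{AS94} paradigm for converting a Poincar\'e-type inequality into a moment-generating-function estimate. The first step is to apply the ordinary Efron-Stein (Poincar\'e) inequality to $G(\eta):=e^{\lambda Z(\eta)/2}$ under the binomial measure $\P_{R(\rho)}$, in which $\eta$ consists of $n$ i.i.d.\ uniform points in $R(\rho)$. Writing $\eta^{(j)}$ for the configuration obtained by resampling the $j$-th point by an independent uniform copy, this yields
$$
\V_{R(\rho)}(G) \,\le\, \frac12 \sum_{j=1}^n \E_{R(\rho)}\big[(G(\eta) - G(\eta^{(j)}))^2\big].
$$

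The second step is a pointwise chain-rule estimate. Using $|e^a - e^b| \le |a-b|\max(e^a,e^b)$ with $a=\lambda Z(\eta)/2$ and $b=\lambda Z(\eta^{(j)})/2$, and the elementary bound $\max(a,b)\le a+(b-a)_+$, we obtain
$$
(G(\eta)-G(\eta^{(j)}))^2 \,\le\, \frac{\lambda^2}{4}\big(Z(\eta)-Z(\eta^{(j)})\big)^2 \, e^{\lambda Z(\eta)} \, e^{\lambda|Z(\eta)-Z(\eta^{(j)})|}.
$$
The third step is to feed in the geometric content behind \eqref{eq:efronstein}: a resample of the $j$-th position affects $Z$ only through the cell attached to $X_j$ (or to its replacement $X_j'$), and this effect is controlled pointwise by the conditional influence $\Inf_j(H_{R'}|\eta)$, possibly after a symmetrisation exploiting exchangeability of $\eta$ and $\eta^{(j)}$. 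Substituting the resulting bound $|Z(\eta)-Z(\eta^{(j)})|\le\Inf_j(H_{R'}|\eta)$ into the display above, summing over $j$, and collecting constants produces exactly the inequality of the lemma.

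The main obstacle is the third step. The exponential weight $e^{\lambda|Z-Z^{(j)}|}$ forces us to upgrade the merely-in-expectation comparison $\E[(Z-Z^{(j)})^2]\lesssim\E[\Inf_j^2]$ that suffices for \eqref{eq:efronstein} to a genuinely pointwise statement, because one cannot integrate out $|Z-Z^{(j)}|$ before exponentiating. Establishing such a pointwise majorisation requires a careful look at the Voronoi tessellation: a position resample is not a pure colour flip but rather a deletion of one cell followed by the insertion of a new one, and one must argue that in each of these two operations the change in the conditional crossing probability is bounded by the pivotality of a single colour bit in one of the two tessellations. Once this geometric step is in place, the remainder of the proof is purely algebraic, with the chain rule producing the exponential correction $e^{\lambda\Inf_j}$ and Efron-Stein supplying the outer factor $e^{\lambda Z}$, matching precisely the shape of the target inequality.
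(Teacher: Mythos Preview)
Your overall strategy (Poincar\'e/Efron--Stein plus a chain-rule estimate) is the right paradigm, but the third step contains a genuine gap, and it is precisely the place where the paper's argument differs from yours.

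You apply the \emph{resampling} form of Efron--Stein, comparing $\eta$ to $\eta^{(j)}$, and then assert the pointwise bound $|Z(\eta)-Z(\eta^{(j)})|\le\Inf_j(H_{R'}|\eta)$. This bound is false in general. As you yourself note, a resample is a deletion followed by an insertion, and the insertion step is controlled by $\Inf_j(H_{R'}|\eta^{(j)})$, the influence in the \emph{other} tessellation. So the honest pointwise bound is
\[
|Z(\eta)-Z(\eta^{(j)})|\le \Inf_j(H_{R'}|\eta)+\Inf_j(H_{R'}|\eta^{(j)}),
\]
and no amount of symmetrisation turns this into a bound by $\Inf_j(H_{R'}|\eta)$ alone. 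If you plug the two-term bound into your display you pick up cross terms and an exponent $e^{\lambda(\Inf_j(\eta)+\Inf_j(\eta^{(j)}))}$; after exchangeability tricks you can get \emph{an} inequality of the right flavour, but not the one stated in the lemma (the constants and the exponential correction come out wrong, and an influence in the resampled configuration survives in the exponent).

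The paper sidesteps this by comparing $\eta$ not to the resampled $\eta^{(j)}$ but to the \emph{deleted} configuration $\eta^-=\eta\setminus\{\eta_m\}$. The martingale decomposition $\V(e^{\lambda Z/2})=\sum_m\V(q_m-q_{m-1})$ together with the trick of subtracting $e^{\lambda Z^-/2}$ (which is $\mathcal{F}_{m-1}$-measurable in conditional expectation) gives $\V(q_m-q_{m-1})\le\E[(e^{\lambda Z/2}-e^{\lambda Z^-/2})^2]$. For the deleted configuration one has the clean pointwise bound $|Z-Z^-|\le\Inf_m(H_{R'}|\eta)$: adding back the point $\eta_m$ coloured red can only raise the crossing probability, coloured blue can only lower it, so $Z^-$ lies between the two and $|Z-Z^-|$ is at most their difference, which is exactly $\Inf_m(H_{R'}|\eta)$. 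This is the geometric step you were looking for, and it is available only because the comparison is to a deletion, not a resample.
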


\begin{proof}
Label the points in $\eta$ arbitrarily by $\eta_1,\eta_2,\ldots,\eta_n$, and let $\mathcal{F}_m$ be the $\sigma$-algebra generated by the positions (which are independent and uniform within $R(\rho)$) of the first $m$ points. Consider the martingale $(q_m)_{m=1,2,\ldots,n}$ where $q_{m}:=\E\big[e^{\lambda Z/2}|\mathcal{F}_{m}\big]$.
By independence of martingale increments we have
\begin{equation}\label{eq:mart}
\V_{R(\rho)}\big(e^{\lambda Z/2}\big)= \sum_{m=1}^{n}\V_{R(\rho)}\big(q_{m}-q_{m-1}\big).
\end{equation}

Write $\eta^-$ for the configuration obtained from $\eta$ when $\eta_m$ is removed, and let $Z^-:=\P(H_{R'}|\eta^-)$. That is, $Z^-$ is the conditional probability that the colouring of the tessellation based on the $n-1$ points in $\eta^-$ results in a red crossing of $R'$. Following~\cite{QuenchedVoronoi}, we next claim that
\begin{equation}\label{eq:mart2}
\V_{R(\rho)}\big(q_{m}-q_{m-1}\big) \leq \E_{R(\rho)} \Big[ \big(e^{\lambda Z/2}-e^{\lambda Z^-/2}\big)^{2}\Big].
\end{equation}
To see this, note that by conditioning on $\mathcal{F}_{m-1}$, the conditional variance formula gives
$$
\V_{R(\rho)}\big(q_m-q_{m-1}\big)=\E_{R(\rho)}\big[\V(q_m|\mathcal{F}_{m-1})\big].
$$
Moreover,
$$
\V(q_m|\mathcal{F}_{m-1})=\V\Big(\E\big[e^{\lambda Z/2}-e^{\lambda Z^-/2}\big|\mathcal{F}_{m}\big]\Big|\mathcal{F}_{m-1}\Big)\le\E\Big[\E\big[e^{\lambda Z/2}-e^{\lambda Z^-/2}\big|\mathcal{F}_m\big]^2\Big|\mathcal{F}_{m-1}\Big],
$$
which by Jensen's inequality is at most $\E\big[(e^{\lambda Z/2}-e^{\lambda Z^-/2})^2\big|\mathcal{F}_{m-1}\big]$, and~\eqref{eq:mart2} follows.

We next claim that, almost surely, for all $\lambda>0$
\begin{equation}\label{eq:mart3}
\big|e^{\lambda Z/2}-e^{\lambda Z^-/2}\big| \leq \frac{\lambda}{2} \,e^{\lambda Z/2}\, e^{\lambda\cdot\Inf_{m}(H_{R'}|\eta)/2}\,\Inf_{m}(H_{R'}|\eta).
\end{equation}
To see this, first observe that, almost surely,
\begin{equation}\label{eq:exclude}
|Z-Z^-|=|\P(H_{R'}|\eta)-\P(H_{R'}|\eta^-)|\le\Inf_m(H_{R'}|\eta).
\end{equation}
This is due to the fact that adding a red point will only increase the probability for $H_{R'}$, and adding a blue point will only decrease the probability; the difference between the two is simply the influence of the newly added point.

We consider the case $Z\ge Z^-$ first, and observe that by the Mean Value Theorem
$$
e^{\lambda Z/2}-e^{\lambda Z^-/2} \,\leq\, \frac{\lambda}{2}\, e^{\lambda Z/2}\, |Z-Z^-|\,\leq\, \frac{\lambda}{2}\, e^{\lambda Z/2}\, \Inf_{m}(H_{R'}|\eta),
$$
almost surely. When $Z<Z^-$, we similarly obtain, almost surely, that
$$
e^{\lambda Z/2}-e^{\lambda Z^-/2} \leq\frac{\lambda}{2}\, e^{\lambda Z^-/2}\,|Z-Z^-|\leq \frac{\lambda}{2}\, e^{\lambda(Z+\Inf_m(H_{R'}|\eta))/2} \,\Inf_{m}(H_{R'}|\eta),
$$
which yields~\eqref{eq:mart3}.

The lemma now follows by combining~\eqref{eq:mart}-\eqref{eq:mart3}.
\end{proof}

\begin{proof}[Proof of Proposition~\ref{pt2}]
Let $Z:=\P(H_{R'}|\eta)$ and let $F(\lambda):=\E_{R(\rho)}\big[\exp\big(\lambda(Z-\E_{R(\rho)}[Z])\big)\big]$.
Note that by multiplying both sides in~\eqref{eqn:ineq1} by $\exp(-\lambda\E_{R(\rho)}[Z])$ we obtain the expression
\begin{equation}\label{eqn:ineq2}
F(\lambda)-F(\lambda/2)^2\,\le\,\frac{\lambda^2}{4}\,e^{-\lambda\E_{R(\rho)}[Z]}\,\E_{R(\rho)}\Big[e^{\lambda Z}\sum_{j=1}^{n}e^{\lambda\cdot\Inf_{j}(H_{R'}|\eta)}\,\Inf_{j}(H_{R'}|\eta)^2\Big].
\end{equation}
Let $B_n:=\big\{\sum_{j=1}^n\Inf_j(H_{R'}|\eta)^2>n^{-\eps}\big\}$. For $0<\lambda<n^{\eps/2}$, the right-hand side in~\eqref{eqn:ineq2} is further bounded by
$$
\frac{e \lambda^{2}}{4 n^{\eps}}F(\lambda)+\frac{\lambda^2}{4}\,e^{-\lambda\E_{R(\rho)}[Z]}\,\E_{R(\rho)}\Big[e^{\lambda Z}\sum_{j=1}^{n}e^{\lambda\cdot\Inf_{j}(H_{R'}|\eta)}\,\Inf_{j}(H_{R'}|\eta)^2\,\Ind_{B_n}\Big].
$$
Since influences are probabilities, they are bounded by $1$. For monotone events, the sum of influences squared are again bounded by $1$. (This follows e.g. by~\eqref{eq:SS}.) Consequently, we obtain the following further upper bound on the right-hand side of~\eqref{eqn:ineq2}
$$
\frac{e \lambda^{2}}{4 n^{\eps}}F(\lambda)+\frac{\lambda^2e^{2\lambda}}{4}e^{-\lambda\E_{R(\rho)}[Z]}\P_{R(\rho)}(B_n)\le\frac{\lambda^2}{4}\Big(\frac{e}{ n^{\eps}}+e^{2\lambda}\P_{R(\rho)}(B_n)\Big)F(\lambda).
$$
Under condition~\eqref{eqn:stp2}, the event $B_n$ occurs with probability at most $\exp(-\beta(\log n)^k)$. So, for $n\ge M$ and $0<\lambda<\frac{\beta}{3}(\log n)^k$ we note that, since $e^{-x}\le x^{-2}$ for $x>0$,
$$
e^{2\lambda}\P_{R(\rho)}(B_n)\le \exp\Big(\frac{2\beta}{3}(\log n)^k-\beta(\log n)^k\Big)\le\exp\Big(-\frac{\beta}{3}(\log n)^k\Big)\le\frac{9}{\beta^2(\log n)^{2k}}.
$$
Let $M'=M'(\eps,\beta,k)$ be the least integer such that $n^{\eps/2}\ge\beta(\log n)^k$ for all $n\ge M'(\eps,\beta,k)$.
With this, equation~\eqref{eqn:ineq2} yields that for $n\ge \max\{M,M'\}$ and $0<\lambda<\frac{\beta}{3}(\log n)^k$ we have
$$
F(\lambda)-F(\lambda/2)^2\le\frac{3\lambda^2}{\beta^2(\log n)^{2k}}F(\lambda),
$$
which after rearrangements takes the form
\begin{equation}\label{eqn:ineq3}
    F(\lambda)\le F(\lambda/2)^2\Big(1-\frac{3\lambda^2}{\beta^2(\log n)^{2k}}\Big)^{-1}.
\end{equation}

\begin{claim}
For $n\ge\max\{M,M'\}$ and $0<\lambda\le\frac{\beta}{4}(\log n)^k$ we have $F(\lambda)\le2$.
\end{claim}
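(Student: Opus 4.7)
The idea is to iterate the recurrence~\eqref{eqn:ineq3} all the way down to the origin. Writing $G(\mu) := (1 - 3\mu^2/(\beta^2(\log n)^{2k}))^{-1}$ and unrolling $m$ applications of~\eqref{eqn:ineq3} produces
\begin{equation*}
F(\lambda) \,\le\, F(\lambda/2^m)^{2^m} \prod_{j=0}^{m-1} G(\lambda/2^j)^{2^j},
\end{equation*}
which is legitimate throughout the range $\lambda \le \tfrac{\beta}{4}(\log n)^k$, since then every argument $\lambda/2^j$ stays strictly below the validity threshold $\tfrac{\beta}{3}(\log n)^k$ of~\eqref{eqn:ineq3}.

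The next step is to pass to the limit $m \to \infty$. Because $Z := \P(H_{R'}|\eta)$ is $[0,1]$-valued, $F$ is entire, and by the centering built into its definition one has $F(0)=1$ and $F'(0) = \E_{R(\rho)}[Z - \E_{R(\rho)} Z] = 0$. A second-order Taylor expansion at the origin therefore yields $F(\mu) = 1 + O(\mu^2)$, so that $2^m \log F(\lambda/2^m) = O(\lambda^2/2^m) \to 0$ and hence $F(\lambda/2^m)^{2^m} \to 1$. Taking logarithms in the displayed inequality and letting $m \to \infty$ gives
\begin{equation*}
\log F(\lambda) \,\le\, \sum_{j=0}^{\infty} 2^j \log G(\lambda/2^j).
\end{equation*}

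It remains to estimate this series for $\lambda \le \tfrac{\beta}{4}(\log n)^k$. With $x_j := 3(\lambda/2^j)^2/(\beta^2(\log n)^{2k})$, one has $x_j \le 3/(16 \cdot 4^j) \le 3/16$, so an elementary bound such as $-\log(1-x) \le x + x^2$ (valid on $[0,1/2]$) applies term by term. The resulting geometric sums $\sum_j 2^j x_j = 6\lambda^2/(\beta^2(\log n)^{2k})$ and $\sum_j 2^j x_j^2 = O(\lambda^4/(\beta^4(\log n)^{4k}))$ are both small at $\lambda = \tfrac{\beta}{4}(\log n)^k$, yielding a numerical bound strictly less than $\log 2$ and hence $F(\lambda) \le 2$. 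The only delicate point is justifying $F(\lambda/2^m)^{2^m} \to 1$, which legitimises the infinite iteration; this is the one place where boundedness of $Z$ is used substantively, while everything else is routine bookkeeping of geometric sums.
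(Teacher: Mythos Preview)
Your proof is correct and follows essentially the same route as the paper: iterate the recurrence~\eqref{eqn:ineq3}, use $F(0)=1$ and $F'(0)=0$ to show $F(\lambda/2^m)^{2^m}\to 1$, and then bound the resulting infinite product. The only cosmetic difference is in the final bookkeeping: the paper bounds the product directly via the inequality $(1-\tfrac{1}{4n})^n\ge 3/4$ to obtain $(4/3)^2\le 2$, whereas you take logarithms and use $-\log(1-x)\le x+x^2$ to sum two geometric series to a value below $\log 2$.
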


\begin{proof}[Proof of claim]
Let $a=3(\beta(\log n)^k)^{-2}$. By iterated use of~\eqref{eqn:ineq3} we obtain
$$
F(\lambda)\,\le\, F(\lambda/2)^2(1-a\lambda^2)^{-1}\,\le\,\ldots\,\le\, F(\lambda/2^m)^{2^m}\prod_{j=0}^{m-1}\Big(1-\frac{a\lambda^2}{2^{2j}}\Big)^{-2^j}.
$$
Since $\log x\le x-1$, we find that
$$
\lim_{m\to\infty}2^m\log\big(F(\lambda/2^m)\big)\le\lim_{m\to\infty}2^m\big(F(\lambda/2^m)-1\big)=\lim_{x\downarrow 0}\lambda\, F'(x)=\lambda\,\E_{R(\rho)}\big[Z-\E_{R(\rho)}[Z]\big]=0.
$$
Consequently, and further using that $a\lambda^2\le1/4$ in the given range, and that $3/4\le\big(1-1/(4n)\big)^n$ for all $n\ge1$, we obtain
$$
F(\lambda)\,\le\,\prod_{j=0}^{\infty}\Big(1-\frac{a\lambda^2}{2^{2j}}\Big)^{-2^j}\le\,\prod_{j=0}^\infty\Big(\Big(1-\frac{1}{4\cdot2^{2j}}\Big)^{-2^{2j}}\Big)^{2^{-j}}
\le\,\prod_{j=0}^{\infty}(3/4)^{-2^{-j}}=\,(4/3)^{2}\,\le\, 2,
$$
as required.
\end{proof}

We may finally specify $\lambda=\frac{\beta}{4}(\log n)^k$ and apply Markov's inequality to obtain
$$
\P_{R(\rho)}\big(Z-\E_{R(\rho)}[Z]>t\big)\le F(\lambda)\,e^{-\lambda t}\le 2\exp\Big(-t\,\frac{\beta}{4}(\log n)^k\Big).
$$
For a bound on deviations below the mean, we note that
$$
-\big(Z-\E_{R(\rho)}[Z]\big)=(1-Z)-\E_{R(\rho)}[1-Z]=\P(H_{R'}^c|\eta)-\P_{R(\rho)}(H_{R'}^c).
$$
We then let $\eta'\sim\P_{R(1/\rho)}$ and note that by duality, and since cells are blue and red with equal probability, the above is equal to $\P(H_{R''}|\eta')-\P_{R(1/\rho)}(H_{R''})$ in distribution. By repeating the above proof with $Z':=\P(H_{R''}|\eta')$ in place of $Z$, via a completely analogous calculation we obtain, using~\eqref{eqn:stp2dual} in place of~\eqref{eqn:stp2}, that
$$
\P_{R(\rho)}\big(Z-\E_{R(\rho)}[Z]<-t\big)=\P_{R(1/\rho)}\big(Z'-\E_{R(1/\rho)}[Z']>t\big)\le 2\exp\Big(-t\,\frac{\beta}{4}(\log n)^k\Big).
$$
Combining the two bounds completes the proof of the proposition.
\end{proof}

\begin{remark}\label{r:N2}
Recall that $M'=M'(\eps,\beta,k)$ was defined as the least integer for which $n^{\eps/2}\ge\beta(\log n)^k$ holds for all $n\ge M'(\eps,\beta,k)$.
We shall later apply Proposition~\ref{pt2} iteratively for $\beta$ and $M$ of the form $\beta=\gamma^{k-1}$ and $M=N^{2^{k-1}}$, where $\gamma\in(0,1)$ and $N\ge1$ are constants, which $\eps$ is kept constant. For this to work, we need to make sure that for every $\eps>0$ there is an $N$ large, so that $M'(\eps,\gamma^{k-1},k)\le N^{2^{k-1}}$. This can be done in an analogous way as in Remark~\ref{r:N}.
\end{remark}

\section{Crossings of rectangles far from the boundary}\label{ImprovedResult}

In this section we take a large step towards a proof of Theorem~\ref{thm:optimized} by proving a preliminary result for rectangles $R'\subseteq R(\rho)$ contained in the bulk. By considering rectangles in the bulk, which do not align with any of the sides of the larger rectangle, we avoid effects of the tessellation that occur close to the boundary. In the next section we show how to deal with these effects, and provide a preliminary version of Theorem~\ref{thm:optimized} for arbitrary rectangles. Concluding the proof of Theorem~\ref{thm:optimized} will then be a matter of optimizing the constants, which we save for our final section.

\begin{theorem}\label{thm:bulk}
There exists $\gamma>0$ such that for every $\theta>0$ there exists $N=N(\theta)$ such that the following holds: Let $\gamma_k:=\gamma^{k-1}$ and $N_k:=N^{2^{k-1}}$. Then, for every $k\ge1$, $\rho\in[\theta,1/\theta]$, $n\ge N_k$ and $t>0$ we have
$$
\P_{R(\rho)}\Big(\big|\P(H_{R(\rho,1/4)}|\eta) - \P_{R(\rho)}(H_{R(\rho,1/4)})\big| \geq t \Big) \leq 4\exp\big(-t\,\gamma_{k}(\log n)^{k}\big).
$$
\end{theorem}

Before the proof, we recall from~\eqref{eq:rsw} that there exists $c>0$ such that
$c\le\Po(H_{R(3,n)})\le1-c$
for all $n\ge1$.
Cover the rectangle $R(3,n)$ by at most $2\sqrt{n}$ squares of area $\sqrt{n}$, and let $E_n$ denote the event that each of these squares contains a point of $\eta$. On the event $E_n$ both $\eta$ and its restriction to $R(3,2n)$ produce the same tessellation on $R(3,n)$. Since $E_n$ occurs with probability tending to one, if follows by Lemma~\ref{l:p_to_b_lower} that for some $c'>0$ and $K\ge1$ we have for $n\ge K$ that
$$
\P_{R(3,4n)}\big(H_{R(3,n)}\cap E_n\big)\ge c'\quad\text{and}\quad\P_{R(3,4n)}\big(H_{R(3,n)}^c\cap E_n\big)\ge c'.
$$
Thus, $\P_{R(3,4n)}\big(H_{R(3,n)}\big)\ge c'$ and $\P_{R(3,4n)}\big(H_{R(3,n)}^c\big)\ge c'$ for $n\ge K$, which after rescaling yields
\begin{equation}\label{eq:bincross}
c'\le\P_{R(3)}\big(H_{R(3,1/4)}\big)\le1-c'.
\end{equation}
(Alternatively, note that~\eqref{eq:bincross} also follows from Proposition~\ref{p:rsw}.)

\begin{proof}
The proof will proceed by induction, applying Propositions~\ref{pt1} and~\ref{pt2} in each step. Note that we may without loss of generality assume that $\theta\le1/3$.
We first prove the theorem for $k=1$, and then proceed with the induction step.

{\em Base step}:
By~\eqref{eq:q_rsw} we know that there exists $\delta>0$ such that for all $n\ge1$
\begin{equation}\label{eq:base}
\Po\big(\delta\le\P(H_{R(3,n)}|\eta)\le1-\delta\big)\ge1-e^{-800}.
\end{equation}
Then, by Proposition~\ref{pt1} and Remark~\ref{r:a} (applied with $k=0$, $\alpha=800$ and $L=1$), there exist $\delta'=\delta'(\delta)>0$ and $N'=N'(\theta)\ge1$ such that for all $\rho\in[\theta,1/\theta]$ and $n\ge N'$
$$
\P_{R(\rho)}\bigg(\sum_{j=1}^n\Inf_j(H_{R(\rho,1/4)}|\eta)^2>n^{-\delta'}\bigg)\le e^{-4\log n}.
$$
Since the above holds for both $\rho$ and $1/\rho$, we obtain from Proposition~\ref{pt2} (applied with $k=1$, $\beta=4$ and $M=N'$) there exists $N''=N''(\delta')$ such that for $\rho\in[\theta,1/\theta]$, $n\ge\max\{N',N''\}$ and $t>0$,
$$
\P_{R(\rho)}\Big(\big|\P(H_{R(\rho,1/4)}|\eta)-\P_{R(\rho)}(H_{R(\rho,1/4)})\big|\ge t\Big)\le 4e^{-t\log n},
$$
as required.

{\em Interlude}: In preparation for the induction step we shall fix some parameters. Let $c'>0$ and $K\ge1$ be such that~\eqref{eq:bincross} holds for $n\ge K$. Fix $\theta\le 1/3$ and let $\eps=\eps(c'/2)$ and $L''=L''(\theta)$ be as in Proposition~\ref{pt1}. Set $\gamma=c'/3200$ and let $N'''\ge1$ be the least integer (which exists, due to Remarks~\ref{r:N} and~\ref{r:N2}) such that
\begin{equation}
\text{$\gamma\log n\ge6$, $(\log n)^k\le n^{1/8}$ and $(\log n)^k\le n^{\eps/2}$ for all 
$k\ge1$ and $n\ge(N''')^{2^{k-1}}$.}
\end{equation}
Finally, set $N:=\max\{K,L'',N',N'',N'''\}$.

{\em Induction step}: Suppose the statement of the theorem is true for $k=\ell$, so that for $\rho=3$, $n\ge N_\ell=N^{2^{\ell-1}}$ and $t>0$ we have
$$
\P_{R(3)}\Big(\big|\P(H_{R(3,1/4)}|\eta)-\P_{R(3)}(H_{R(3,1/4)})\big|\ge t\Big)\le 4e^{-t\gamma_\ell(\log n)^\ell}.
$$
Taking $t=c'/2$, and recalling~\eqref{eq:bincross}, leaves us with
$$
\P_{R(3)}\Big(\frac{c'}{2}\le\P(H_{R(3,1/4)}|\eta)<1-\frac{c'}{2}\Big)\le4e^{-\frac{c'}{2}\gamma_\ell(\log n)^\ell},
$$
for $n\ge N_\ell$.
Next, Lemma~\ref{l:b_to_p} (with $R$ replaced by $R(3,4n)$ and $N$ by $n$) gives for $n\ge N_\ell$
$$
\Po\Big(\frac{c'}{2}\le\P(H_{R(3,n)}|\eta)<1-\frac{c'}{2}\Big)\le4e^{-\frac{c'}{2}\gamma_\ell(\log n)^\ell}+\Po\big(|\eta\cap R(3,4n)|<n\big)\le e^{-\frac{c'}{4}\gamma_\ell(\log n)^\ell},
$$
where we have used that $\Po\big(|\eta\cap R(3,4n)|<n\big)\le e^{-n}\le e^{-(\log n)^\ell}$ for $n\ge N_\ell$.
We are now set to apply Proposition~\ref{pt1} (for $\alpha=c'\gamma_\ell/4$, $k=\ell$ and $L=N_\ell$) to obtain that (since $N\ge \max\{L'',N'''\}$) for $n\ge N_{\ell+1}$
$$
\P_{R(\rho)}\bigg(\sum_{j=1}^n\Inf_j(H_{R(\rho,1/4)}|\eta)^2>n^{-\eps}\bigg)\le \exp\Big(-\frac{c'\gamma_\ell}{800}(\log n)^{\ell+1}\Big).
$$
Finally, applying Proposition~\ref{pt2} (for $\beta=c'\gamma_\ell/800$, $k=\ell+1$, $\eps=\eps$ and $M=N_{\ell+1}$) we obtain (since $N\ge N'''$) for all $\rho\in[\theta,1/\theta]$, $n\ge N_{\ell+1}$ and $t>0$
$$
\P_{R(\rho)}\Big(\big|\P(H_{R(\rho,1/4)}|\eta)-\P_{R(\rho)}(H_{R(\rho,1/4)})\big|\ge t\Big)\le 4\exp\Big(-t\,\frac{c'\gamma_\ell}{3200}(\log n)^{\ell+1}\Big).
$$
It follows that the theorem holds also for $k=\ell+1$, and the proof is complete.
\end{proof}

\section{Crossings of arbitrary rectangles}\label{s:boundary}

In order to prove Theorem~\ref{thm:optimized} we need to extend Theorem~\ref{thm:bulk} to include rectangles touching the boundary.



\begin{theorem}\label{thm:boundary}
There exist $\gamma>0$ such that for every $\theta>0$ there exists $N=N(\theta)$ such that the following holds: Let $\gamma_k=\gamma^{k-1}$ and $N_k=N^{2^{k-1}}$. Then, for every $k\ge 1$, $t>0$, $\rho\in [\theta,1/\theta]$ and $n\ge N_k$ we have for any (axis parallel) rectangle $R'\subseteq R(\rho)$ of area at least $\theta$ that
$$
\P_{R(\rho)}\Big(\big|\P(H_{R'}|\eta)-\P_{R(\rho)}(H_{R'})\big|\ge t\Big)\le 4e^{-t\gamma_k(\log n)^k}.
$$
\end{theorem}

Note that the aspect ratio of $R'$ lies in the interval $[\theta^2, 1/\theta^2]$.

The proof of Theorem~\ref{thm:boundary} is mostly a straightforward adaptation of the proof of Theorem~\ref{thm:bulk}, so we shall only outline the proof and highlight the distinctions. There are two main modifications required. The first is a version of Proposition~\ref{pt1} generating a bound on the sum of influences squared also when the interior rectangle is aligned with the boundary. In order to obtain this, the proposition will have to require a stronger assumption, involving bounds on the crossing probabilities of rectangles in a Voronoi tessellation of a half-plane. The second modification is that apart from~\eqref{eq:q_rsw}, we shall need also~\eqref{eq:q_rsw_halfplane}, in order to get the induction machine started.

The version of Proposition~\ref{pt1} we require is the following. Recall that $H^\ast_{R}$ denotes event of a red horizontal crossing of $R\subseteq\mathbb{H}=[0,\infty)\times\R$ in the Voronoi tessellation generated by $\eta\cap\mathbb{H}$.

\begin{proposition}\label{pt1_alt}
Suppose there exist constants $\alpha,c>0$ and $k,L\ge1$ such that for $n\ge L$ we have
\begin{align}
\label{cond:halfplane1}
\Po\Big(c<\P(H_{R(3,n)}|\eta)<1-c\Big)&\ge1-e^{-\alpha(\log n)^k},\\
\label{cond:halfplane2}
\Po\Big(c<\P(H^\ast_{R_0(3/2,n)}|\eta)<1-c\Big)&\ge1-e^{-\alpha(\log n)^k}.
\end{align}
Then, for every $\theta>0$ there exist constants $\eps=\eps(c)$, $L'=L'(\alpha,k)$ and $L''=L''(\theta)$ such that for all $\rho\in[\theta,1/\theta]$, $n\ge\max\{L^2,L',L''\}$ and (axis parallel) rectangle $R'\subseteq R(\rho)$ of area at least $\theta$, we have
$$
\P_{R(\rho)}\bigg(\sum_{j=1}^n\Inf_j(H_{R'}|\eta)^2\ge n^{-\eps}\bigg)\le e^{-\frac{\alpha}{200}(\log n)^{k+1}}.
$$
\end{proposition}

As with Proposition~\ref{pt1}, the proof will consist of an estimate on the one-arm event in combination with an application of the Schramm-Steif Revealment Theorem. It is for the former of the two that we require the additional assumption, so that we can bound the arm event not only in the bulk, but also for points close to the boundary.

\begin{lemma}\label{OneArmImp_alt}
Under the assumptions of Proposition~\ref{pt1_alt}, there exist 
$\eps=\eps(c)$, $L'(\alpha,k)$ and $L''(\theta)$ such that for every $\rho\in[\theta,1/\theta]$ and $u\in R(\rho,n)$ we have for $n\ge\max\{L^2,L'(\alpha,k),L''(\theta)\}$ that
$$
\P_{R(\rho,n)}\Big(\P\big(V_u(n^{1/4},n^{1/3})|\eta\big)\ge n^{-\eps}\Big)\le e^{-\frac{\alpha}{100}(\log n)^{k+1}}.
$$
\end{lemma}

\begin{proof}
The proof is analogous to the proof of Lemma~\ref{OneArmImp}, with minor modifications for the case when $u$ is close to the boundary. In fact, we shall distinguish between three cases depending on whether $u$ is close to a \emph{corner}, close to a \emph{side}, or within the \emph{bulk} of the rectangle $R(\rho,n)$. We define being close to a corner as being within (Euclidean) distance $n^{19/60}$ of two of the sides; being close to a side as being within distance $n^{17/60}$ of one of the sides, but at distance at least $n^{19/60}$ to the remaining sides; and being in the bulk as being at distance at least $n^{17/60}$ to all sides of $R(\rho,n)$. (It is convenient to think of the interval $[1/4,1/3]$ split into five intervals with endpoints $1/4=15/60$, $16/60$, $17/60$, $18/60$, $19/60$ and $1/3=20/60$.)

{\em Case 1: corner.} Let $x$ denote one of the corners of $R(\rho,n)$ and consider $u$ within Euclidean distance $n^{19/60}$ of the two sides associated with the corner, so that $u$ is within $\ell_\infty$-distance $n^{19/60}$ of $x$. Consider the annuli $A_j=x+[-\frac327^j,\frac327^j]\setminus[-\frac127^j,\frac127^j]$ for $j$ in $J':=\{j\in\N:4n^{19/60}\le 7^j\le\frac12n^{1/3}\}$. Only a quarter of these annuli are contained within $R(\rho,n)$. Hence, the absence of a red crossing from the inner to the outer boundary of $A_j$, which we again denote by $O_j$, will in this case corresponds to a blue path connecting two sides of $R(\rho,n)$ within $A_j$. The quarter of $A_j$ contained within $R(\rho,n)$ can be covered by two rectangles with aspect ratio $\frac32:1$, in such a way that a crossing of each of these rectangles imply said connection between the two sides. Thus, Harris' inequality and~\eqref{cond:halfplane2} give
$$
\Po\big(\P(O_j|\eta)<c^2\big)<2e^{-\alpha(\log n)^k},
$$
for all $n\ge L^2$. Proceeding as before provides a bound on the conditional probability of $V_u(n^{19/60},n^{1/3})$ of the correct order, valid uniformly over all corner points.

{\em Case 2: side.} Next, let $u\in R(\rho,n)$ be any point within Euclidean distance $n^{17/60}$ to the boundary of $R(\rho,n)$, but at $\ell_\infty$-distance at least $n^{19/60}$ to a corner, and let $y$ denote the boundary point closest to $u$. Consider  $A_j=y+[-\frac327^j,\frac327^j]\setminus[-\frac127^j,\frac127^j]$ for $j$ in $J'':=\{j\in\N:4n^{17/60}\le7^j\le\frac12n^{18/60}\}$. Note that only half of $A_j$ is contained inside $R(\rho,n)$, and that this half can be covered by three rectangles, two of ratio $\frac32:1$ and one of ratio $3:1$, such that a blue crossing of each of these impedes a red crossing from the inner to the outer boundary of $A_j$; the event denoted by $O_j$. Again, Harris' inequality, combined with both~\eqref{cond:halfplane1} and~\eqref{cond:halfplane2}, gives
$$
\Po\big(\P(O_j|\eta)<c^3\big)<3e^{-\alpha(\log n)^k},
$$
for all $n\ge L^2$. Proceeding as before results in a bound on the conditional probability of the event $V_u(n^{17/60},n^{18/60})$.

{\em Case 3: bulk.} For points in the bulk of the rectangle straightforward adaptations of the proof of Lemma~\ref{OneArmImp} will provide a bound on the conditional probability of $V_u(n^{1/4},n^{16/60})$ of the correct order.

The three different cases described above require minimal modification from one another (which in each case may give rise to slightly different values of the involved constants). Together, they complete the proof of the lemma.
\end{proof}

\begin{proof}[Proof of Proposition~\ref{pt1_alt}]
The proof is a straightforward adaptation of the proof of Proposition~\ref{pt1}, using $R'$ instead of $R(\rho,1/4)$ and Lemma~\ref{OneArmImp_alt} instead of Lemma~\ref{OneArmImp}.
\end{proof}

\begin{proof}[Proof of Theorem~\ref{thm:boundary}]
The proof will proceed by induction, much like the proof of Theorem~\ref{thm:bulk}. Note that we may without loss of generality assume that $\theta\le1/3$.
We first prove the theorem for $k=1$, and then proceed with the induction step.

{\em Base step}:
By~\eqref{eq:q_rsw_halfplane} we know that there exists $\delta>0$ such that for all $n\ge1$
$$
\Po\big(\delta\le\P(H^\ast_{R_0(3/2,n)}|\eta)\le1-\delta\big)\ge1-e^{-800}.
$$
Together with~\eqref{eq:base}, Proposition~\ref{pt1_alt} and Remark~\ref{r:a} (applied with $k=0$, $\alpha=800$ and $L=1$), there exist $\delta'=\delta'(\delta)>0$ and $N'=N'(\theta)\ge1$ such that for all $\rho\in[\theta,1/\theta]$, $n\ge N'$ and any rectangle $R'\subseteq R(\rho)$ of area at least $\theta$
\begin{equation}\label{eq:infsum}
\P_{R(\rho)}\bigg(\sum_{j=1}^n\Inf_j(H_{R'}|\eta)^2>n^{-\delta'}\bigg)\le e^{-4\log n}.
\end{equation}
Since rotation of $R(\rho)$ by $\pi/2$ results in $R(1/\rho)$, we obtain from Proposition~\ref{pt2} (applied with $k=1$, $\beta=4$ and $M=N'$) there exists $N''=N''(\delta')$ such that for $\rho\in[\theta,1/\theta]$, $n\ge\max\{N',N''\}$ and $t>0$,
$$
\P_{R(\rho)}\Big(\big|\P(H_{R'}|\eta)-\P_{R(\rho)}(H_{R'})\big|\ge t\Big)\le 4e^{-t\log n},
$$
as required.

{\em Interlude}: In preparation for the induction step we shall fix some parameters. Fix $\theta\le1/3$. Let $c''>0$ and $K\ge1$ be such that~\eqref{eq:bincross} holds for $n\ge K$. Let $\eps=\eps(c''/2)$ and $L''=L''(\theta)$ be as in Proposition~\ref{pt1_alt}. Set $\gamma=c''/3200$ and let $N'''\ge1$ be the least integer (which exists, due to Remarks~\ref{r:N} and~\ref{r:N2}) such that
\begin{equation}
\text{$\gamma\log n\ge6$, $(\log n)^k\le n^{1/8}$ and $(\log n)^k\le n^{\eps/2}$ for all 
$k\ge1$ and $n\ge(N''')^{2^{k-1}}$.}
\end{equation}
Finally, let $N:=\max\{K,L'',N',N'',N'''\}$.

{\em Induction step}: Suppose the statement of the theorem is true for $k=\ell$. Setting $R'=R(3,1/4)$ and $t=c''/2$ we obtain, just as in the proof of Theorem~\ref{thm:bulk}, that for $n\ge N_\ell$
$$
\Po\Big(\frac{c''}{2}\le\P(H_{R(3,n)}|\eta)<1-\frac{c''}{2}\Big)\geq 1-e^{-\frac{c''}{4}\gamma_\ell(\log n)^\ell}.
$$
Instead, setting $R'=[-\sqrt{3}/2,-\sqrt{3}/4]\times[-1/(4\sqrt{3}),1/(4\sqrt{3})]$, so that $R'$ is a $\frac32:1$ rectangle aligned with the left side of $R(3)$, leads in an analogous manner to the bound, for $n\ge N_\ell$,
$$
\Po\Big(\frac{c''}{2}\le\P(H^\ast_{R_0(3/2,n)}|\eta)<1-\frac{c''}{2}\Big)\geq 1-e^{-\frac{c''}{4}\gamma_\ell(\log n)^\ell},
$$
where $c''$ is the constant of Proposition~\ref{p:rsw}.  And indeed, this argument relies on Proposition~\ref{p:rsw}, which gives a bound on the annealed crossing probability.

Now, Proposition~\ref{pt1_alt} (for $\alpha=c''\gamma_\ell/4$, $k=\ell$ and $L=N_\ell$) gives (since $N\ge \max\{L'',N'''\}$) for $n\ge N_{\ell+1}$ and any axis parallel rectangle $R'\subseteq R(\rho)$ of area at least $\theta$ that
$$
\P_{R(\rho)}\bigg(\sum_{j=1}^n\Inf_j(H_{R'}|\eta)^2>n^{-\eps}\bigg)\le \exp\Big(-\frac{c''\gamma_\ell}{800}(\log n)^{\ell+1}\Big).
$$
Finally, applying Proposition~\ref{pt2} (for $\beta=c''\gamma_\ell/800$, $k=\ell+1$ and $M=N_{\ell+1}$) we obtain (since $N\ge N'''$) for all $\rho\in[\theta,1/\theta]$, $n\ge N_{\ell+1}$, $t>0$ and any axis parallel rectangle $R'\subseteq R(\rho)$ of area at least $\theta$ that
$$
\P_{R(\rho)}\Big(\big|\P(H_{R'}|\eta)-\P_{R(\rho)}(H_{R'})\big|\ge t\Big)\le 4\exp\Big(-t\,\frac{c''\gamma_\ell}{3200}(\log n)^{\ell+1}\Big).
$$
Hence, the theorem holds also for $k=\ell+1$, and the proof is complete.
\end{proof}

\section{Optimizing the rate of decay}\label{s:optimized}

Completing the proof of Theorem~\ref{thm:optimized} is now a matter of verification.

\begin{proof}[Proof of Theorem~\ref{thm:optimized}]
Let $\gamma>0$ and $N=N(\theta)$ be as in Theorem~\ref{thm:boundary}. For $n\ge1$, let $k(n)$ denote the integer such that
$$
N^{2^{k(n)-1}}<n\le N^{2^{k(n)}}.
$$
The upper inequality implies that
$$
k(n)\ge\frac{\log\log n-\log\log N}{\log 2}\ge\frac43\log\log n,
$$
for $n$ larger than some $N'\ge N$. Theorem~\ref{thm:boundary} thus gives that
\begin{align*}
    \P_{R(\rho)}\big(|\P(H_{R'}|\eta)-\P_{R(\rho)}(H_{R'})|\ge t\big)&\le4\exp\Big(-t\gamma_{k(n)}(\log n)^{k(n)}\Big)\\
    &\le4\exp\Big(-t\gamma_{k(n)}(2^{k(n)-1}\log N)^m(\log n)^{k(n)-m}\Big),
\end{align*}
for any integer $m$, and $n\ge N'$. Taking $m$ large, so that $\gamma 2^m>1$, leads to the further upper bound
$$
4\exp\Big(-t(\log n)^{k(n)-m}\Big)\le4\exp\Big(-t(\log n)^{\log\log n}\Big)=4\exp\Big(-t\,e^{(\log\log n)^2}\Big)
$$
for all $n$ sufficiently large.
\end{proof}

In the next section we shall prove Theorem~\ref{thm:poptimized}, which is an analogue of Theorem~\ref{thm:optimized} for the Poisson model. However, let us illustrate already here that a weaker statement follows easily from Theorem~\ref{thm:optimized} through comparison.

\begin{corollary}\label{cor:Poisson}
For every $\theta>0$ there exists $c=c(\theta)>0$ and $N=N(\theta)\ge1$ such that for all $\rho\in[\theta,1/\theta]$ and $n\ge N$ we have
$$
\Po\big(c<\P(H_{R(\rho,n)}|\eta)<1-c\big)\ge1-\exp\big(-e^{(\log\log n)^2}\big).
$$
\end{corollary}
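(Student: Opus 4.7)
The plan is to reduce to the binomial setting by scaling, apply Theorem~\ref{thm:optimized} together with the box-crossing property~\eqref{eq:rsw_intro}, and then invoke the comparison lemmas of Section~\ref{Comparison} to transport the resulting concentration statement back to the Poisson model.

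First, the scaling $x \mapsto x/\sqrt{n}$ sends a unit-rate Poisson process on $\R^2$ to an intensity-$n$ Poisson process and maps $R(\rho,n)$ onto $R(\rho)$. Since Voronoi tessellations and crossing events are scale-invariant, the random variable $\P(H_{R(\rho,n)}|\eta)$ under $\Po$ has the same distribution as its analogue in the intensity-$n$ Poisson model on the unit-area rectangle $R(\rho)$. This reduces the corollary to the unit-area setting. In the binomial model, Theorem~\ref{thm:optimized} with $R' = R(\rho)$ gives
\begin{equation*}
\P_{R(\rho)}\big( |\P(H_{R(\rho)}|\eta) - \P_{R(\rho)}(H_{R(\rho)})| \ge t \big) \le 4\exp\big(-t\, e^{(\log\log n)^2}\big).
\end{equation*}
The annealed probability $\P_{R(\rho)}(H_{R(\rho)})$ is bounded away from $0$ and $1$: the box-crossing property~\eqref{eq:rsw_intro} provides this under $\Po$, and the comparison lemmas transfer the bound to the binomial model. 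Choosing $t = c_0/2$, where $c_0 = c_0(\theta) > 0$ is this separation constant, then places the binomial quenched probability in the interval $(c_0/2, 1 - c_0/2)$ with probability at least $1 - 4\exp(-(c_0/2)\,e^{(\log\log n)^2})$.

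The remaining and most delicate step is to transport this statement from $\P_{R(\rho)}$ to the (rescaled) Poisson measure $\Po$. Here the comparison lemmas of Section~\ref{Comparison} are precisely designed to bound the difference between expectations of bounded functionals supported on a bounded region under the two measures, so the goal is to apply them to the indicator of the event that $\P(H_{R(\rho)}|\eta)$ lies outside $(c_0/2, 1-c_0/2)$. Two issues require care. First, under $\Po$ the quenched probability $\P(H_{R(\rho)}|\eta)$ depends on points outside $R(\rho)$ as well, so I would work with a slightly enlarged rectangle $\tilde R \supseteq R(\rho)$ and exploit the fact that, with overwhelming probability, no Voronoi cell of a point outside $\tilde R$ intrudes into $R(\rho)$. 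Second, the number of points inside $\tilde R$ under $\Po$ is Poisson with mean of order $n$, which by standard concentration is within $O(\sqrt n \log n)$ of its mean with probability $1 - o(1/n)$; conditional on this count being any value $m$ in this range the positions are distributed as in the binomial model with $m$ points, so Theorem~\ref{thm:optimized} may be invoked uniformly in $m$.

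The main obstacle is verifying that the errors introduced by these two approximations are dominated by the target tail $\exp(-e^{(\log\log n)^2})$. However, since this quantity decays more slowly than any negative power of $n$, it suffices to control the comparison errors at polynomial rate, which is exactly the kind of bound the estimates of Section~\ref{Comparison} deliver. Combining these ingredients yields the claimed bound, with an appropriate constant $c=c(\theta) > 0$ (obtained from $c_0$ after a small further shrinkage to absorb the comparison errors) and $N = N(\theta)$ chosen large enough for the Poisson concentration, the comparison lemmas, and Theorem~\ref{thm:optimized} all to apply.
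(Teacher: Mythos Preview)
Your overall plan---apply Theorem~\ref{thm:optimized}, use the box-crossing property for the annealed centering, and then pass to $\Po$ via the comparison lemmas of Section~\ref{Comparison}---is exactly the paper's route. There is, however, a concrete error in your final paragraph: you assert that $\exp\big(-e^{(\log\log n)^2}\big)$ ``decays more slowly than any negative power of $n$'' and conclude that polynomial control of the approximation errors suffices. The inequality goes the other way. Writing $u=\log n$, one has $e^{(\log u)^2}/u\to\infty$, so $e^{(\log\log n)^2}\gg k\log n$ for every fixed $k$, and the target tail decays \emph{faster} than every power of $1/n$. A merely polynomial error bound would therefore not be absorbed. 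What actually saves the argument is that the two approximation errors you mention---the Poisson tail $\Po(|\eta\cap\tilde R|<n)$ and the probability that a cell of a point outside $\tilde R$ reaches the inner rectangle---are both exponentially small in $n$ (of order $e^{-cn}$ and $e^{-c\sqrt n}$), hence dwarfed by the target.

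On the structural side, your initial choice $R'=R(\rho)$ does not feed into the comparison step: under $\Po$ the quenched probability $\P(H_{R(\rho,n)}|\eta)$ is not $\mathcal{F}_{R(\rho,n)}$-measurable, so the binomial estimate for $R'=R(\rho)$ cannot be transported directly, and indeed you immediately retreat to an enlarged rectangle $\tilde R$ and re-invoke Theorem~\ref{thm:optimized} there. The paper skips this detour by taking $R'=R(\rho,1/4)$ from the outset: rescaling, $\P_{R(\rho)}$ with $n$ points and $R'=R(\rho,1/4)$ is the same as $\P_{R(\rho,4n)}$ with $n$ points and crossing rectangle $R(\rho,n)$, after which Lemma~\ref{l:b_to_p} (with $R=R(\rho,4n)$, $N=n$) yields the Poisson bound in one line, the residual localisation error being the $e^{-cn}$ just discussed. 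Your ``enlarged $\tilde R$'' fix, once made precise, is the same manoeuvre, so the argument is recoverable---but your first application of Theorem~\ref{thm:optimized} plays no role.
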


\begin{proof}
Let $c'$ be as in~\eqref{eq:bincross} and let $\rho\in[\theta,1/\theta]$. From (the proof of) Theorem~\ref{thm:optimized} for sufficiently large $n$ (depending on $\theta$) we have
$$
\P_{R(\rho)}\Big(\frac{c'}{2}\le\P(H_{R(\rho,1/4)}|\eta)\le1-\frac{c'}{2}\Big)\ge1-4\exp\big(-\frac{c'}{2}e^{\frac76(\log\log n)^2}\big)\ge1-\frac12\exp\big(-e^{(\log\log n)^2}\big).
$$
Lemma~\ref{l:b_to_p} thus gives, for such values of $n$, that
\begin{align*}
\Po\Big(\frac{c'}{2}<\P(H_{R(\rho,n)}|\eta)<1-\frac{c'}{2}\Big)&\ge1-\frac12\exp\big(-e^{(\log\log n)^2}\big)-\Po\big(|\eta\cap R(\rho,4n)|<n\big)\\
&\ge1-\frac12\exp\big(-e^{(\log\log n)^2}\big)-e^{-n}\\
&\ge1-\exp\big(-e^{(\log\log n)^2}\big),
\end{align*}
as required.
\end{proof}

\section{Rate in the Poisson model}\label{sec:poptimized}

In this section we prove Theorem \ref{thm:poptimized}, which provides a version of Theorem~\ref{thm:optimized} in the Poisson setting. We saw already in the previous section how to derive a weaker statement through comparison. In order to obtain the full statement of Theorem~\ref{thm:poptimized}, we shall need to complement the comparison lemma with an argument showing that the probability of crossing a given rectangle changes very little when adding a small number of points.

Throughout the paper we have used $n$ to denote the number of points when the point set $\eta$ is chosen in the binomial model, but let this number be implicit in the notation. In this section we shall need to compare probabilities for crossing a fixed rectangle for different numbers of points. We shall therefore need to make the number of points considered explicit in our notation, and write $\P_{R(\rho,n),m}$ for the probability measure by which $\eta$ is selected as a set of $m$ uniformly random points in the rectangle $R(\rho,n)$. As before we write $R(\rho)$ for $R(\rho,1)$.

The first step will be a lemma which states that the crossing probability in this model does not change quickly as $m$ changes, which may be of some interest on its own.

\begin{lemma}\label{lem:Rrnm} There exists a constant $c>0$ such that for every $\theta>0$ there exists $N_0=N_0(\theta)$ such that the following holds for all $\rho\in [\theta,1/\theta]$ and $n\ge N_0$: For every $m$ satisfying $|m-n|<n^{1/2+c}$ and every (axis parallel) rectangle $R'\subseteq R(\rho)$ with area at least $\theta$ we have
$$
\big|\P_{R(\rho),m}(H_{R'})\, -\, \P_{R(\rho),n}(H_{R'})\big|\, <\, n^{-c}.
$$
\end{lemma}

\begin{remark} This crossing probability is believed to be stable over much larger intervals.  In particular, if Voronoi percolation exhibits conformal invariance, then the crossing probability converges as the number of point increases.
\end{remark}

\begin{proof} We shall prove that adding a single point has very little effect.  Specifically, we shall prove that there exists a constant $c>0$ such that for $n$ large and any $m$ with $|m-n|< n^{1/2+c}$ we have
\begin{equation}\label{eq:onebound}
\big|\P_{R(\rho),m}(H_{R'})\, -\, \P_{R(\rho),m-1}(H_{R'})\big|\, <\, n^{-(1/2+2c)}.
\end{equation}
The lemma then follows immediately using the triangle inequality.

Let $\eta$ denotes a set of $m$ uniformly random points in $R(\rho)$, and let $\eta_-$ be obtained by removing one point uniformly at random.  We then have
\begin{align*}
\big|\P_{R(\rho),m}(H_{R'})\, -\, \P_{R(\rho),m-1}(H_{R'})\big|\, & =\, \big|\E_{R(\rho),m}\big[\P(H_{R'}|\eta)-\P(H_{R'}\big|\eta_{-})\big]\big|\\ 
& \le\, \E_{R(\rho),m}\left[\big|\P(H_{R'}|\eta)-\P(H_{R'}|\eta_{-})\big|\right]\\
& \le\, \E_{R(\rho),m}\bigg[\frac{1}{m}\sum_{i=1}^{m}\Inf_{i}(H_{R'}|\eta)\bigg]\, ,
\end{align*}
where the final inequality follows from~\eqref{eq:exclude} and the fact that $\eta_{-}$ is obtained by removing a randomly chosen point.

Note further that
\[
\frac{1}{m}\sum_{i=1}^{m}\Inf_{i}(H_{R'}|\eta)\, \le\, \bigg(\frac{1}{m}\sum_{i=1}^{m}\Inf_{i}(H_{R'}|\eta)^2\bigg)^{1/2}
\]
by the Cauchy-Schwarz inequality. Recall that for monotone events the sum of influences squared are bounded by 1. (This follows e.g.\ by~\eqref{eq:SS}.) Moreover, by~\eqref{eq:infsum} there exists $c'>0$ such that the sum of influences squared is at most $m^{-c'}$ with probability at least $1-m^{-c'}$, when $m$ is large. It follows, using Jensen's inequality, that for large values of $m$
$$
\big|\P_{R(\rho),m}(H_{R'})\, -\, \P_{R(\rho),m-1}(H_{R'})\big|\,\le\, \E_{R(\rho),m}\bigg[\frac1m\sum_{i=1}^{m}\Inf_{i}(H_{R'}|\eta)^2\bigg]^{1/2} \le\, 2m^{-(1/2+c'/2)}.
$$
Setting $c=c'/8$ this proves~\eqref{eq:onebound} in the given range on $m$ and $n$, as required.
\end{proof}

We shall deduce from Lemma~\ref{lem:Rrnm} that the probabilities of crossing a large rectangle is the two models are close when the actual number of points in the binomial model is close to the expected number of points in the Poisson model. More precisely, we shall establish that for some $c>0$ we have
\begin{equation}\label{Pom}
\big|\P_{R(\rho,2n),m}(H_{R(\rho,n)})-\Po(H_{R(\rho,n)})\big|\, <\, 6n^{-c}
\end{equation}
whenever $\rho\in[\theta,1/\theta]$ and $|m-2n|<n^{1/2+c}$, provided that $n$ is sufficiently large.

To this end, let $c>0$ and $N_0(\theta)$ be as in Lemma~\ref{lem:Rrnm} and let $\theta>0$ be fixed. We then have
\[
\big|\P_{R(\rho,2n),m}(H_{R(\rho,n)})-\P_{R(\rho,2n),2n}(H_{R(\rho,n)})\big|\, <\, n^{-c}
\]
whenever $\rho\in[\theta,1/\theta]$, $|m-2n|<n^{1/2+c}$ and $n\ge N_0(\theta)$.

Cover $R(\rho,2n)$ by squares of area $\rho n/1000$, and let $F$ be the event that every square contains a point of $\eta$. An argument similar to the one that appears in the proof of Claim~\ref{claim:revealment} shows that the probability of $F^{c}$ is exponentially small in $n$ with respect to both $\Po$ and $\P_{R(\rho,2n),m}$ when $m\ge n$. And so, writing $E$ for the event $H_{R(\rho,n)}\cap F$, we have both $|\Po(H_{R(\rho,n)})-\Po(E)|<n^{-c}$ and
\begin{equation}\label{eq:varyE}
\big|\P_{R(\rho,2n),m}(E)-\P_{R(\rho,2n),2n}(E)\big|\, <\, 2n^{-c}
\end{equation}
whenever $|m-2n|<n^{1/2+c}$ and $n$ is large. As in Section~\ref{Comparison}, we may express the Poisson probability by conditioning on the number of points of $\eta$ in the rectangle.  Let $A_m$ be the event that $|\eta\cap R(\rho,2n)|=m$.  Since $E$ is measurable with respect to points and colours inside $R(\rho,2n)$ we have
\begin{align*}
\Po(E)\, & =\, \sum_{m\ge 0} \P(E|A_m)\Po(A_m)\\
& =\, \sum_{m\ge 0}\P_{R(\rho,2n),m}(E)\Po(A_m)\, .
\end{align*}
Since $\Po$ puts mass at most $\exp(-n^{c})$ that the (Poisson distributed) number of points falls outside the interval $2n\pm n^{1/2+c}$, we have from~\eqref{eq:varyE} (applied twice) that
\[
\big|\P_{R(\rho,2n),m}(E)-\Po(E)\big|\, <\, \big|\P_{R(\rho,2n),2n}(E)-\Po(E)\big|+2n^{-c}\,<\, 5n^{-c} 
\]
whenever $|m-2n|<n^{1/2+c}$ and $n$ is large, and so~\eqref{Pom} follows.

\begin{proof}[Proof of Theorem~\ref{thm:poptimized}]
Fix $\theta>0$ and let $p_n(t):=\exp\big(-t\,e^{(\log\log n)^2}\big)$. Without loss of generality, we may in the following suppose that $t\le1$. Consequently, $\Po(F^c)\le p_n(t)$, where $F$ is the event defined earlier in this section, and $\Po$ puts mass at most $p_n(t)$ that the number of points in $R(\rho,2n)$ falls outside the interval $2n\pm n^{1/2+c}$, for all sufficiently large $n$.

We have
\begin{align*}
\Po\big(&|\P(H_{R(\rho,n)}|\eta)-\Po(H_{R(\rho,n)})|>t\big)\,  \le \, \Po\big(\{|\P(H_{R(\rho,n)}|\eta)-\Po(H_{R(\rho,n)})|>t\}\cap F\big)\, +\, \Po(F^{c})\phantom{\Big|}\\
&\le\, \sum_{m\ge 0} \Po\big(\{|\P(H_{R(\rho,n)}|\eta)-\Po(H_{R(\rho,n)})|>t\}\cap F\big|A_m\big)\Po(A_m)\, +\, p_n(t)\phantom{\Big|}\\
&\le\, \max\big\{ \P_{R(\rho,2n),m}\big(\{|\P(H_{R
(\rho,n)}|\eta)-\Po(H_{R(\rho,n)})|>t\}\cap F\big)\, :\, |m-2n|\le n^{1/2+c}\big\}\, +\, 2p_n(t).
\end{align*}
For $t<12n^{-c}$ the upper bound in the theorem is trivial, since $p_n(t)>1$ for large $n$, so there is nothing to prove. For $t\ge12n^{-c}$ we obtain from~\eqref{Pom} the further upper bound
$$
\max\big\{ \P_{R(\rho,2n),m}\big(\{|\P(H_{R(\rho,n)}|\eta)-\P_{R(\rho,2n),m}(H_{R(\rho,n)})|>t/2\}\cap F\big)\, :\, m\ge n\big\}\, +\, 2p_n(t),
$$
which by Theorem~\ref{thm:optimized} is no larger than $6p_n(t)$.
\end{proof}

\appendix

\section{The revealment theorem}

Here, in this appendix, we shall provide a short elementary and probabilistic proof of the version of the Schramm-Steif revealment theorem as presented in~\eqref{eq:SS}. The proof is similar to that of an inequality due to O'Donnell and Servedio~\cite{odonnell2007}. For notational convenience during the proof we phrase the result in terms of subsets of the cube $\{-1,1\}^n$.

\begin{proposition}
Let $A\subseteq\{-1,1\}^n$ be a monotone event and let $\mathcal{A}$ be a (randomized) algorithm that determines $A$. Then, for any subset $J\subseteq[n]$ we have
$$
\sum_{j\in J}\Inf_j(A)^2\le\max_{j\in J}\P(\mathcal{A}\text{ queries }j).
$$
\end{proposition}

Note that the proposition gives a bound on the sum of influences squared over a subset $J$ of the bits. It thus extends and simplifies a precursor to the Schramm-Steif Revealment Theorem which is due to Benjamini-Kalai-Schramm; see~\cite[Theorem~12.52]{garban_steif_book} for a precise statement.

\begin{proof}
Let $f:\{-1,1\}^n\to\{-1,1\}$ be the function that takes the value $1$ for $\omega\in A$ and $-1$ for $\omega\not\in A$. Then $f$ is monotone (increasing) and we have $\Inf_j(A)=\E[f\omega_j]$ for each $j=1,2,\ldots,n$. We thus obtain
$$
\sum_{j\in J}\Inf_j(f)^2=\sum_{j\in J}\Inf_j(f)\E[f\omega_j]=\E\bigg[f\,\sum_{j\in J}\Inf_j(f)\omega_j\bigg]=\E\bigg[ f\,\E\bigg[\sum_{j\in J}\Inf_j(f)\omega_j\bigg|\mathcal{F}\bigg]\bigg],
$$
where we have written $\mathcal{F}$ for the information revealed by the algorithm. Applying Cauchy-Schwartz gives us that
$$
\sum_{j\in J}\Inf_j(f)^2\le\bigg(\E[f^2]\,\E\bigg[\E\bigg[\sum_{j\in J}\Inf_j(f)\omega_j\bigg|\mathcal{F}\bigg]^2\bigg]\bigg)^{1/2}.
$$
Since there is no information regarding the state of the bits not queried by the algorithm in $\mathcal{F}$, the square of the right-hand side in the above expression can be rewritten as
$$
\E\bigg[\bigg(\sum_{j\in J}\Inf_j(f)\omega_j\Ind_{\{\mathcal{A}\text{ queries }j\}}\bigg)^2\bigg]=\E\bigg[\sum_{i,j\in J}\Inf_i(f)\Inf_j(f)\omega_i\omega_j\Ind_{\{\mathcal{A}\text{ queries $i$ and }j\}}\bigg].
$$
When the algorithm first queries one of $i$ and $j$ it has no information regarding the other. Consequently, the `off diagonal' entries of the double sum will be zero, so that
$$
\bigg(\sum_{j\in J}\Inf_j(f)^2\bigg)^2\le\sum_{j\in J}\Inf_j(f)^2\,\P(\mathcal{A}\text{ queries }j)\le\max_{i\in J}\P(\mathcal{A}\text{ queries }i)\sum_{j\in J}\Inf_j(f)^2.
$$
Rearranging the two sides gives the claimed result.
\end{proof}

\printbibliography

{\small
\noindent
{\sc 

Department of Mathematics, Stockholm University\\ 
SE-10691 Stockholm, Sweden}\\

\noindent
{\sc 
Departamento de Matem\'atica, PUC-Rio \\
Rua Marqu\^{e}s de S\~{a}o Vicente 225, G\'avea, 22451-900 Rio de Janeiro, Brasil}\\

\end{document}